\documentclass{amsart}

\usepackage{ytableau}

\usepackage{tikz}
\usepackage{amssymb}
\usepackage{mathtools}

\newcommand{\al}{\alpha}

\newcommand{\la}{\lambda}
\newcommand{\La}{\Lambda}
\newcommand{\we}{\wedge}

\DeclareMathOperator{\st}{Std_{\la}(\mu)}
\DeclareMathOperator{\sta}{Std_{\al}(\mu)}
\DeclareMathOperator{\ST}{Std_{\la^{+}}(\mu^{+})}
\DeclareMathOperator{\STa}{Std_{\al^{+}}(\mu^{+})}
\DeclareMathOperator{\Hom}{Hom}


\newtheorem{theorem}{Theorem}[section]
\newtheorem{lemma}[theorem]{Lemma}
\newtheorem{proposition}[theorem]{Proposition}
\newtheorem{cor}[theorem]{Corollary}
\theoremstyle{definition}
\newtheorem{definition}[theorem]{Definition}
\newtheorem{example}[theorem]{Example}

\newtheorem{remark}[theorem]{Remark}
\newtheorem{remarks}[theorem]{Remarks}

\numberwithin{equation}{section}

\begin{document}

\title{Relating homomorphism spaces between Specht modules of different degrees}

\address{Department of Mathematics, University of Athens}
\curraddr{}
\email{mmaliak@math.uoa.gr}
\thanks{}

\author{Dimitra-Dionysia Stergiopoulou}
\address{Department of Mathematics, University of Athens}
\curraddr{}
\email{dstergiop@math.uoa.gr}
\thanks{}

\subjclass[2020]{Primary 20G05, Secondary 20C30, 05E10}

\date{}

\dedicatory{}

\begin{abstract}Let $K$ be an infinite field of characteristic $p>0$ and let $\lambda, \mu$ be partitions of $n$, where $\lambda=(\lambda_1,...,\lambda_n)$ and $\mu=(\mu_1,..,\mu_n)$. By $S^{\lambda}$ we denote the Specht module corresponding to $\lambda$ for the group algebra $K\mathfrak{S}_n$ of the symmetric group $\mathfrak{S}_n$. D. Hemmer has raised the question of relating the homomorphism spaces $\Hom_{\mathfrak{S}_n}(S^{\mu}, S^{\lambda})$ and $\Hom_{\mathfrak{S}_{n'}}(S^{\mu^+}, S^{\lambda^+})$, where $n'=n+kp^d$, $\lambda^+ =\lambda+(kp^{d})$, $\mu^+=\mu+(kp^{d})$, and $d, k$ are positive integers. We show that these are isomorphic if $p$ is odd,  $p^d >\min\{\la_2, \mu_1-\lambda_1\}$ and $\mu_2 \le \lambda_1$.
\end{abstract}
\maketitle
\bibliographystyle{amsplain}

\section{Introduction}Let $K$ be an infinite field of characteristic $p>0$. For a partition $\la$ of $n$, let $S^{\la}$ be the Specht module for the group algebra $K\mathfrak{S}_n$ of the symmetric group $\mathfrak{S}_n$ defined in \cite{Gr}, Section 6.3. These modules play an important role in the representation theory of $\mathfrak{S}_n$. For example the determination of their composition factors and the corresponding multiplicities is a central open problem in the area. Relatively few results are known concerning homomorphism spaces $\Hom_{\mathfrak{S}_n}(S^{\mu}, S^{\la})$ between Specht modules. One of the general results on their dimensions are the row and column removal theorems of Fayers and Lyle \cite{FL}. See also the treatment by Kulkarni \cite{Ku} on such results for Weyl modules.

If $\la=(\la_1,...,\la_n)$ is a partition and $m$ a nonnegative integer, we denote by $\la+(m)$ the partition $(\la_1+m, \la_2,...,\la_n)$. In \cite{He}, Problem 5.4, Hemmer raised the question of finding general theorems that relate the spaces $\Hom_{\mathfrak{S}_n}(S^{\mu}, S^{\la})$ and  $\Hom_{\mathfrak{S}_{n'}}(S^{\mu^+}, S^{\la^+})$, where $n'=n+kp^{d}$,  $\la^+ =\la+(kp^{d})$, $\mu^+=\mu+(kp^{d})$ and $d, k$ are positive integers. Motivation for this was the work of Henke \cite{Hen} and Henke and Koenig \cite{HK} where, among other results, equalities of related decomposition numbers for Schur algebras of degrees $n$ and $n+kp^d$ for the general linear group were obtained. Additional motivation when $p=2$ comes from the results of Dodge and Fayers {\cite{DF} on decomposable Specht modules.

The purpose of this paper is to prove the following result.

\begin{theorem}
	Let $K$ be an infinite field of characteristic $p>2$, let $\la=(\la_1,...\la_n)$, $\mu=(\mu_1,...,\mu_n)$  be partitions of $n$ and let $k,d$ be positive integers. If $p^d >\min\{\la_2, \mu_1-\la_1\}$ and $\mu_2\le \la_1$, then \[\Hom_{\mathfrak{S}_n}(S^{\mu}, S^{\la})\simeq \Hom_{\mathfrak{S}_{n'}}(S^{\mu^+}, S^{\la^+}),\] 
	where $n'=n+kp^{d}$,  $\la^+ =\la+(kp^{d})$ and $\mu^+=\mu+(kp^{d}).$
\end{theorem}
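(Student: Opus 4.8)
The plan is to transfer the problem to polynomial representations of a general linear group (equivalently, to a Schur algebra), where adding $kp^{d}$ to the first part of a partition becomes a shift of the first coordinate of a dominant weight by a highly divisible amount, and then to compare costandard modules before and after that shift.

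\emph{Reduction to costandard modules.} Fix $N\ge n'=n+kp^{d}$ and let $f$ be the Schur functor for $S(N,n)$, together with its analogue for $S(N,n')$. Using that $f$ carries costandard modules to Specht modules and that, for $N$ at least the degree, it induces isomorphisms $\Hom_{GL_{N}}(\nabla(\mu),\nabla(\la))\xrightarrow{\ \sim\ }\Hom_{\mathfrak{S}_{n}}(S^{\mu},S^{\la})$ and likewise in degree $n'$, it suffices to prove
\[
\Hom_{GL_{N}}\!\big(\nabla(\mu),\nabla(\la)\big)\ \cong\ \Hom_{GL_{N}}\!\big(\nabla(\mu^{+}),\nabla(\la^{+})\big).
\]
(Via the transpose duality $M\mapsto M^{\circ}$, which interchanges $\nabla$ and $\Delta$, one may equally work with Weyl modules; I shall freely use whichever is convenient.) If $\la$ and $\mu$ have different $p$-cores then both sides vanish, and since adding $p^{d}$ boxes to one row adds $p^{d-1}$ rim $p$-hooks, $\la^{+},\mu^{+}$ have the same $p$-cores as $\la,\mu$, so the right-hand sides vanish as well; hence we may assume $\la$ and $\mu$ are in the same block. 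Finally, adding $kp^{d}$ to the first part is the $k$-fold iteration of adding $p^{d}$, and the hypotheses persist at every step ($\la_{2},\mu_{2},\mu_{1}-\la_{1}$ are unchanged and $\mu_{2}\le\la_{1}$ only weakens), so by induction on $k$ we may assume $k=1$ and write $\la^{+}=\la+(p^{d})$, $\mu^{+}=\mu+(p^{d})$.

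\emph{The key short exact sequences.} For any partition $\nu$, the module $\nabla(\nu)\otimes S^{p^{d}}E$ (with $S^{p^{d}}E=\nabla(p^{d}\varepsilon_{1})$) has a good filtration whose top section is $\nabla(\nu^{+})$, yielding
\[
0\longrightarrow K_{\nu}\longrightarrow\nabla(\nu)\otimes S^{p^{d}}E\longrightarrow\nabla(\nu^{+})\longrightarrow 0,
\]
with $K_{\nu}$ good-filtered and its sections the $\nabla(\nu+\sigma)$ for $\sigma$ a horizontal strip of size $p^{d}$ other than $(p^{d})$. Since $S^{p^{d}}E\otimes(S^{p^{d}}E)^{*}$ has the trivial module as a direct summand, $\Hom_{GL_{N}}(\nabla(\mu^{+}),\nabla(\la)\otimes S^{p^{d}}E)$ contains a natural copy of $\Hom_{GL_{N}}(\nabla(\mu),\nabla(\la))$, so applying $\Hom_{GL_{N}}(\nabla(\mu^{+}),-)$ to the $\nu=\la$ sequence produces a natural map
\[
\Theta\colon\ \Hom_{GL_{N}}\!\big(\nabla(\mu),\nabla(\la)\big)\ \longrightarrow\ \Hom_{GL_{N}}\!\big(\nabla(\mu^{+}),\nabla(\la^{+})\big);
\]
a symmetric argument, applying $\Hom_{GL_{N}}(-,\nabla(\la^{+}))$ to the $\nu=\mu$ sequence in the Weyl-module picture, furnishes an inverse candidate. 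The theorem then reduces to the assertion that $\Theta$ is an isomorphism, i.e.\ that the extra sections $\nabla(\la+\sigma)$ of $K_{\la}$ and $\nabla(\mu+\sigma)$ of $K_{\mu}$ contribute nothing to the relevant $\Hom$ and $\Ext^{1}$ groups that would obstruct bijectivity.

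\emph{Where the hypotheses enter, and the main obstacle.} This last point is the crux, and is precisely where $p^{d}>\min\{\la_{2},\mu_{1}-\la_{1}\}$, $\mu_{2}\le\la_{1}$ and $p>2$ are needed. The subtlety is that the hypotheses do \emph{not} force every section $\nabla(\la+\sigma)$, $\nabla(\mu+\sigma)$ into a block different from the common block of $\la^{+}$ and $\mu^{+}$ — such ``bad'' sections genuinely can occur — but they constrain the horizontal strips $\sigma$ so tightly that, among the bad sections, the only weights positioned (in the dominance-and-linkage sense that governs $\Hom$ and $\Ext^{1}$ of costandard modules) so as to receive a homomorphism from $\nabla(\mu^{+})$, respectively to map to $\nabla(\la^{+})$, are $\la^{+}$ and $\mu^{+}$ themselves. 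The two alternatives in the minimum split the analysis: if $p^{d}>\la_{2}$ one bounds the strips that can be attached to $\la$ using the shortness of the rows of $\la$ below the first, while if $p^{d}>\mu_{1}-\la_{1}$ (the case $\mu_{1}<\la_{1}$ being trivial, both Hom spaces then vanishing by dominance) one bounds the strips on $\mu$ using $\mu_{1}-\la_{1}<p^{d}$; the hypothesis $\mu_{2}\le\la_{1}$ is what keeps $\mu^{+}$ in the correct position relative to $\la^{+}$ throughout, and $p>2$ enters through the structural facts about costandard modules (for instance, via the Jantzen sum formula and the shape of their submodule lattices) invoked to rule the bad sections out. Carrying out this simultaneous combinatorial bookkeeping — classifying the sections of $K_{\la}$ and $K_{\mu}$ lying in the relevant block under the bound on $p^{d}$, and verifying by $p$-core and dominance combinatorics that none of them disturbs $\Theta$ — is the technical heart of the argument, and is the step I expect to be the main obstacle.
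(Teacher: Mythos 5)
Your proposal takes a genuinely different route from the paper (which works with the Akin--Buchsbaum--Weyman presentation of Weyl modules by generators and relations, an explicit bijection $T\mapsto T^{+}$ on standard tableaux, and a direct check --- via Lucas' theorem and the invariance of straightening coefficients under the shift --- that $\phi$ kills the defining relations of $\Delta(\la)$ if and only if $\phi^{+}$ kills those of $\Delta(\la^{+})$). However, as written your argument has a genuine gap: the entire content of the theorem is deferred to the final step, which you yourself describe as ``the main obstacle'' and do not carry out. Concretely, after applying $\Hom_{GL_{N}}(\nabla(\mu^{+}),-)$ to $0\to K_{\la}\to\nabla(\la)\otimes S^{p^{d}}E\to\nabla(\la^{+})\to 0$ you must control both $\Hom(\nabla(\mu^{+}),K_{\la})$ and $\Ext^{1}(\nabla(\mu^{+}),K_{\la})$; since $K_{\la}$ is only good-filtered and $\nabla(\mu^{+})$ is costandard (not Weyl/standard), neither group vanishes for formal reasons, and no mechanism is given for computing them. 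The hypotheses $p^{d}>\min\{\la_{2},\mu_{1}-\la_{1}\}$ and $\mu_{2}\le\la_{1}$ are invoked only impressionistically; nothing in the sketch shows how they actually eliminate the bad sections, and the paper's proof makes essential quantitative use of them (the first through $\binom{a+kp^{d}}{b}\equiv\binom{a}{b}\bmod p$ for $p^{d}>b$, the second through the bijection of standard tableaux). Without that step there is no proof.

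There is also at least one intermediate claim that is false as stated: in characteristic $p$ the trivial module need not be a direct summand of $S^{p^{d}}E\otimes(S^{p^{d}}E)^{*}$, since the composite of coevaluation and evaluation is multiplication by $\dim S^{p^{d}}E=\binom{N+p^{d}-1}{p^{d}}$, which can vanish mod $p$; so your construction of the comparison map $\Theta$ already needs repair (e.g.\ one should instead use the adjunction and a chosen surjection onto the top section of the relevant filtration). The reduction to $k=1$ and the $p$-core/block observations are fine, but they do not touch the core difficulty. By contrast, the paper's elementary tableau computation sidesteps all block-theoretic and $\Ext$-vanishing questions and produces the isomorphism explicitly; if you want to pursue your homological route you would need to actually classify the horizontal strips $\sigma$ with $\la+\sigma$ (resp.\ $\mu+\sigma$) linked to and dominating/dominated by the relevant weight, and prove the required $\Hom$- and $\Ext^{1}$-vanishing, which is a substantial piece of work not present here.
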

We cast the proof in the context of Weyl modules for the general linear group $GL_n(K)$, see Theorem 2.1, and use a classic result of Carter and Lusztig \cite{CL} to descend to Specht modules, see Remark 2.2. Our proof is elementary, as it is based on computations with tableaux, and yields an explicit isomorphism on the level of homomorphism spaces of Weyl modules.

In Section 2 we state the main result and in Section 3 we gather the necessary recollections for its proof  which is given in Section 4.

\section{Main result}
\subsection{Notation}

Throughout this paper, $K$ will be an infinite field of
characteristic $p>0$. Also we will fix positive integers $n$ and $r$. We will be working mostly with homogeneous polynomial representations of $GL_n(K)$ of degree $r$, or equivalently, with modules over the Schur algebra $S=S_K(n,r)$. A standard reference here is  \cite{Gr}.

Let $V=K^n$ be the natural $GL_n(K)$-module. The divided power algebra $DV=\sum_{i\geq 0}D_iV$ of $V$ is defined as the graded dual of the Hopf algebra $S(V^{*})$, where $V^{*}$ is the linear dual of $V$ and $S(V^{*})$ is the symmetric algebra of $V^{*}$, see \cite{ABW}, I.4. 

By $\wedge(n,r)$ we denote the set of sequences $\al=(\al_1, \dots, \al_n)$ of length $n$ of nonnegative integers that sum to $r$ and by $\wedge^+(n,r)$ we denote the subset of $\wedge(n,r)$ consisting of sequences $\lambda=(\lambda_1, \dots, \lambda_n)$ such that $\lambda_1 \ge \lambda_2 \dots \ge \lambda_n$. Elements of $\wedge^+(n,r)$ are referred to as partitions (of $r$ with at most $n$ parts). The transpose partition $\la^t =(\la_1^t,...,\la_n^t)$ of a partition $\la=(\la_1,...,\la_n)$ is defined by $\la_j^t = \# \{i: \la_i \ge j\}$, $j=1,...,\la_1$. Note that $\la^t \in \we^{+}(\la_1,r)$.

If $\al=(\al_1,\dots, \al_n) \in \wedge(n,r)$, we denote by $D(\al)$ or $D(\al_1,\dots,\al_n)$ the tensor product $D_{\al_1}V\otimes \dots \otimes D_{\al_n}V$. All tensor products in this paper are over $K$.

The exterior algebra of $V$ is denoted $\Lambda V=\sum_{i\geq 0}\Lambda^iV$. If $\al=(\al_1,\dots, \al_n) \in \wedge(n,r)$, we denote by $\Lambda(\al)$  the tensor product $\Lambda^{\al_1}V\otimes \dots \otimes \Lambda^{\al_n}V$.

For $\lambda \in \wedge^+(n,r)$, we denote by $\Delta(\lambda)$ the corresponding Weyl module for $S$. In \cite{ABW}, Definition II.1.4, the module $\Delta(\la)$ (denoted $K_{\la}F$ there), was defined as the image a map $d'_{\la} : D(\la) \to \Lambda(\la^{t})$. For example, if $\lambda =(r)$, then $\Delta(\lambda) =D_rV$, and if $\lambda =(1^r)$, then $\Delta(\lambda) =\La^{r}V$.

If $\la =(\la_1,...,\la_n)$ is a partition of $r$ and $m$ a nonnegative integer, we denote by $\la+(m)$ the partition $(\la_1+m,\la_2,...,\la_n)$ of $r+m$.

\subsection{Main result} The main result of this paper is the following.
\begin{theorem}Let $K$ be an infinite field of characteristic $p>0$ and let $\la, \mu$ be partitions in $\wedge^+(n,r)$ such that $\mu_2 \le \la_1$. Suppose $k,d$ are nonnegative integers such that $p^d >\min\{\la_2, \mu_1-\la_1\}$. Then \[\dim_K\Hom_S(\Delta(\la), \Delta(\mu))=\dim_K\Hom_{S'}(\Delta(\la^+), \Delta(\mu^+)),\]
	where $S=S_K(n,r), S^{'}=S_K(n, r+kp^d)$, $\la ^{+} = \la +(kp^d)$ and $\mu ^{+} = \mu +(kp^d).$
\end{theorem}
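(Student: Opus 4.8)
The plan is to construct an explicit isomorphism between the two homomorphism spaces by working with concrete presentations of the Weyl modules in terms of divided powers and tableaux. Recall that $\Delta(\la)$ is the image of the map $d'_\la \colon D(\la) \to \Lambda(\la^t)$, and that homomorphisms out of $\Delta(\la)$ are governed by the Carter--Lusztig-type description: a homomorphism $\Delta(\la) \to \Delta(\mu)$ is determined by where a fixed highest weight generator (the ``canonical'' semistandard tableau of shape $\la$) goes, subject to the Garnir-type straightening relations that define $\Delta(\la)$ as a quotient of $D(\la)$. So the first step is to set up, following \cite{ABW} and \cite{Gr}, the bases of $\Delta(\la)$ and $\Delta(\mu)$ indexed by semistandard tableaux, and to record the relations (the kernel of $D(\la) \twoheadrightarrow \Delta(\la)$) explicitly enough that a homomorphism can be encoded as a linear combination of semistandard tableaux of shape $\mu$ and content $\la$ annihilated by these relations.

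The second step is the key construction: given $\phi \in \Hom_S(\Delta(\la), \Delta(\mu))$, I want to produce $\phi^+ \in \Hom_{S'}(\Delta(\la^+), \Delta(\mu^+))$ by ``adding a horizontal strip of length $kp^d$ to the first row'' of the relevant tableaux, and show this assignment is well defined (respects the defining relations), injective, and surjective. The hypotheses $\mu_2 \le \la_1$ and $p^d > \min\{\la_2, \mu_1 - \la_1\}$ should be exactly what is needed to guarantee that the relations of $\Delta(\la^+)$ and $\Delta(\mu^+)$ involving the enlarged first row either reduce to relations already present at level $r$, or are automatically satisfied because the relevant divided-power entries $D_j V$ with $j \ge p^d$ vanish modulo lower terms or because no Garnir relation straddles the new boxes. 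Concretely: the condition $\mu_2 \le \la_1$ ensures that a semistandard filling of $\mu^+$ of content $\la^+$ has its ``extra'' boxes forced into the first row in a controlled way, and the $p$-adic condition on $p^d$ controls the binomial coefficients $\binom{a+kp^d}{b}$ that appear when one commutes divided powers past the added strip — here Lucas' theorem (hence the need $p$ odd? — actually $p>0$ suffices at this level, the $p>2$ is for the descent to Specht modules in the introduction's Theorem) makes these coefficients behave like the truncated ones.

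The third step, once $\phi \mapsto \phi^+$ is shown to be a well-defined linear injection, is to exhibit the inverse: ``remove the strip of length $kp^d$ from the first row.'' One shows any $\psi \in \Hom_{S'}(\Delta(\la^+),\Delta(\mu^+))$ has image coordinates supported only on tableaux whose first row contains a full initial segment of $1$'s of length at least $kp^d$ more than at level $r$ — again using $p^d > \min\{\la_2,\mu_1-\la_1\}$ and $\mu_2\le\la_1$ to rule out the other semistandard fillings — and that stripping these produces a homomorphism at level $r$. Matching the two assignments as mutual inverses gives the dimension equality (in fact an explicit isomorphism, as advertised).

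The main obstacle I anticipate is the bookkeeping in step two: verifying that the Garnir/straightening relations defining $\Delta(\mu^+)$ pull back correctly under the strip-addition map. There are two flavors of relations — the ``quadratic'' ABW relations from the map $d'_{\mu^+}$ (shuffles of divided powers into exterior powers) and the column relations — and one must check the added first-row boxes do not create new obstructions. The hypothesis $\mu_2 \le \la_1$ is presumably what prevents a Garnir relation between the first and second rows of $\mu^+$ from mixing the new boxes with old ones in an uncontrolled way, and getting this interaction exactly right (especially keeping track of which binomial coefficients vanish in characteristic $p$ via the $p^d$ bound) is where the real work lies; everything else is linear algebra and combinatorics of semistandard tableaux.
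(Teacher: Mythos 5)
Your overall strategy coincides with the paper's: encode a homomorphism $\D(\la)\to\D(\mu)$ as a linear combination $\sum_{T}c_T\phi_T$ of the basis maps of $\Hom_S(D(\la),\D(\mu))$ indexed by $T\in\st$, transport it by inserting $kp^d$ ones at the start of the first row of each $T$ (a bijection $\st\to\ST$ precisely because $\mu_2\le\la_1$), and check that the ABW defining relations of $\D(\la)$ are annihilated before the insertion if and only if the corresponding relations of $\D(\la^+)$ are annihilated after. That architecture is correct, and your identification of where the two hypotheses enter is broadly right.

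The gap is at the decisive step, which you flag as ``bookkeeping'' but which rests on a structural fact you have not supplied. Evaluating $\phi=\sum c_T\phi_T$ on a relation element $x_{i,t}$ yields a combination $\sum_U a_U[U]$ of generally non-standard tableaux $U$ whose ones all lie in the first row, with at least $\la_1$ of them; evaluating $\phi^+$ on $x^+_{i,t}$ yields $\sum_U a_U[U^+]$ with the \emph{same} scalars $a_U$ --- trivially for $i\ge 2$, and for $i=1$ because the only coefficients that change are $\binom{\la_1+j_1+kp^d}{j_1}$ with $j_1\le t\le\min\{\la_2,\mu_1-\la_1\}<p^d$ (terms with $\la_1+t>\mu_1$ having been discarded via the two-row straightening law), so Lucas applies. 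But equality of the $a_U$ does not by itself give $\sum_U a_U[U]=0\Leftrightarrow\sum_U a_U[U^+]=0$, since the $[U]$ are not basis elements: you must also know that straightening $[U]$ and $[U^+]$ into the standard bases of $\D(\mu)$ and $\D(\mu^+)$ produces identical coefficients. This is the paper's Lemma 4.4 and Corollary 4.5, proved by noting that for such $U$ the straightening takes place entirely in rows $2,\dots,n$ (which $U$ and $U^+$ share) together with a row-1/row-2 straightening whose coefficients are independent of the multiplicity of $1$ in the first row (Lemma 4.6(2)), using $\mu_2\le\la_1$ to guarantee the reassembled tableaux are standard. Without this invariance, ``the relations reduce to relations already present at level $r$'' is an assertion rather than a proof; with it, your outline closes up into the paper's argument.
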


\begin{remark} Suppose $p>2$. Then from Theorem 3.7 of \cite{CL}, the vector spaces $\Hom_S(\Delta(\nu), \Delta(\nu'))$ and $\Hom_{\mathfrak{S}_n}(S^{\nu'}, S^{\nu})$ are isomorphic for all partitions $\nu, \nu'$ of $n$. Hence Theorem 1.1 follows from Theorem 2.1.
\end{remark}
\begin{example}We note that if either of the assumptions $p^d >\min\{\la_2, \mu_1-\la_1\}$ and $\mu_2 \le \la_1$ of Theorem 2.1 is relaxed, then the conclusion is not necessarily true, as the following examples show.\begin{enumerate}
		\item Let $p=3, \la=(8,3), \mu=(11)$ and $k=d=1$ so that $\la^+=(11,3)$ and $\mu^+=(14)$. Here we have $p^d =\min\{\la_2, \mu_1-\la_1\}$. The dimensions of the corresponding Hom spaces are \begin{align*}&\dim_K\Hom_S(\Delta(\la), \Delta(\mu))=1, \\ &\dim_K\Hom_{S'}(\Delta(\la^+), \Delta(\mu^+))=0.\end{align*}
		\item Let $p=3, \la=(1,1,1,1), \mu=(2,2)$ and $k=d=1$ so that $\la^+=(4,1,1,1)$ and $\mu^+=(5,2)$. Here we have $ \mu_2 > \la_1$. The dimensions of the corresponding Hom spaces are \begin{align*}&\dim_K\Hom_S(\Delta(\la), \Delta(\mu))=1, \\ &\dim_K\Hom_{S'}(\Delta(\la^+), \Delta(\mu^+))=0.\end{align*}
	\end{enumerate}
The above dimensions follow, for example, from  Theorem 3.1 of \cite{MS1} since in each case a hook is involved.
\end{example}
\begin{example}
	In \cite{MS2}, Example 6.4, we considered examples of Hom spaces of dimension greater than 1. Fix a prime $p>0$ and let $a$ be an integer such that $a>(p^2+1)(p-1)$ and $a \equiv p-2 \mod p^2.$ Define the  partitions  \begin{align*}&\la(a)=(a, 2p-1, (p-1)^{p^2}), \\& \mu(a)=(a+p, (p^2+1)(p-1)),\end{align*}
	where $p-1$ appears $p^2$ times as a part of $\la(a)$. It was shown in \cite{MS2} that \[\dim_K\Hom_S(\Delta(\la(a)), \Delta(\mu(a)))>1.\] Now according to Theorem 2.1, all of the above spaces have the same dimension (as $a$ varies and $p$ is fixed).
	
	For example,  let $p=3$. The least $a$ satisfying the above conditions is $a=28$. Using the GAP4 program written by M. Fayers \cite{F}, we have \[\dim_K\Hom_S(\Delta(\la(28)), \Delta(\mu(28)))=2.\] Hence \[\dim_K\Hom_S(\Delta(\la(a)), \Delta(\mu(a)))=2\]
	for all $a \ge 28$ such that $a \equiv 1 \mod 9$.
\end{example}

\section{Recollections}
In this section we recall some results need for the proof of Theorem 2.1.

\subsection{Relations for Weyl modules.} We recall from \cite{ABW}, Theorem II.3.16, the following description of $\Delta (\lambda)$ in terms of generators and relations. \begin{theorem}[\cite{ABW}] Let $\lambda=(\lambda_1,\dots,\lambda_m) \in \wedge^+(n,r)$, where $\lambda_m >0$. There is an exact sequence of $S$-modules \[
\sum_{i=1}^{m-1}\sum_{t=1}^{\lambda_{i+1}}D(\lambda_1,\dots,\lambda_i+t,\lambda_{i+1}-t,\dots,\lambda_m) \xrightarrow{\square_{\la}} D(\lambda) \xrightarrow{d'_\lambda} \Delta(\lambda) \to 0,
\]
where the restriction of $ \square_{\la} $ to the summand $M(t)=D(\lambda_1,\dots,\lambda_i+t,\lambda_{i+1}-t,\dots,\lambda_m)$ is the composition
\[
M(t) \xrightarrow{1\otimes\cdots \otimes \Delta \otimes \cdots 1}D(\lambda_1,\dots,\lambda_i,t,\lambda_{i+1}-t,\dots,\lambda_m)\xrightarrow{1\otimes\cdots \otimes \eta \otimes \cdots 1} D(\lambda),
\]
where $\Delta:D(\lambda_i+t) \to D(\lambda_i,t)$ and $\eta:D(t,\lambda_{i+1}-t) \to D(\lambda_{i+1})$ are the indicated components of the comultiplication and multiplication respectively of the Hopf algebra $DV$ and $d'_\lambda$ is the map in \cite{ABW}, Def.II.13.\end{theorem}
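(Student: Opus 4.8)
The plan is to prove the stated exactness directly, exploiting that $\Delta(\lambda)$ is by construction the image of $d'_\lambda\colon D(\lambda)\to\Lambda(\lambda^{t})$; thus $d'_\lambda$ is automatically a surjection $D(\lambda)\twoheadrightarrow\Delta(\lambda)$ and the real content is the identification $\ker d'_\lambda=\operatorname{im}\square_\lambda$. I would prove the two inclusions separately, working throughout with the monomial basis of $D(\lambda)=D_{\lambda_1}V\otimes\cdots\otimes D_{\lambda_m}V$ coming from the divided-power bases of the tensor factors: this basis is indexed by fillings of the diagram of $\lambda$ with entries in $\{1,\dots,n\}$ and weakly increasing rows, and it renders $\square_\lambda$ and $d'_\lambda$ as explicit combinatorics of such fillings.

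First I would verify that the displayed sequence is a complex, i.e.\ $d'_\lambda\circ\square_\lambda=0$, which is the inclusion $\operatorname{im}\square_\lambda\subseteq\ker d'_\lambda$. Since $\square_\lambda$ alters only the tensor factors in positions $i$ and $i+1$, it suffices to establish the two-row identity that the composite
\[
D(a+t,b-t)\xrightarrow{\ 1\otimes\Delta\ }D(a,t,b-t)\xrightarrow{\ 1\otimes\eta\ }D(a,b)\xrightarrow{\ d'_{(a,b)}\ }(\Lambda^{2}V)^{\otimes b}\otimes V^{\otimes(a-b)}
\]
vanishes for $1\le t\le b$, and then to apply it in positions $i,i+1$. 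The two-row identity is a concrete computation: comultiply the first row down to single copies of $V$, shuffle the $t$ vectors that the box map has pushed into the second row, wedge each of the first $b$ columns, and observe that the resulting signed sum over shuffles cancels in pairs --- this is exactly the syzygy produced when two rows are antisymmetrised into one column.

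For the reverse inclusion $\ker d'_\lambda\subseteq\operatorname{im}\square_\lambda$, note that by the previous step $d'_\lambda$ descends to a surjection $\overline{d'_\lambda}\colon D(\lambda)/\operatorname{im}\square_\lambda\twoheadrightarrow\Delta(\lambda)$; it suffices to show this is an isomorphism, and since both sides are finite-dimensional it is enough to match dimensions. The upper bound is a straightening lemma: modulo $\operatorname{im}\square_\lambda$, every filling of $\lambda$ with weakly increasing rows equals an integral combination of semistandard tableaux of shape $\lambda$. I would prove this by induction with respect to a suitable order on fillings (for example the lexicographic order on their column-reading words): if a filling has a column that is not strictly increasing, a box relation from the summand indexed by the relevant $i$ and an appropriate shift $t$ rewrites it as $\pm$ a filling with fewer such violations, plus fillings strictly smaller in the chosen order, and one iterates; here the divided-power structure is essential so that the multinomial coefficients that appear behave correctly in characteristic $p$. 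This yields $\dim_{K}D(\lambda)/\operatorname{im}\square_\lambda\le\#\operatorname{SSYT}(\lambda,n)$. For the lower bound I would invoke the classical fact that $\Delta(\lambda)$ admits a base-change-stable $\mathbb{Z}$-form whose character is the Schur polynomial $s_\lambda(x_1,\dots,x_n)$ --- equivalently, that the images under $d'_\lambda$ of the semistandard tableaux of shape $\lambda$ are $K$-linearly independent --- so that $\dim_{K}\Delta(\lambda)=\#\operatorname{SSYT}(\lambda,n)$. The two bounds then coincide, $\overline{d'_\lambda}$ is an isomorphism, and $\ker d'_\lambda=\operatorname{im}\square_\lambda$.

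I expect the main obstacle to be the straightening lemma together with the linear independence of the semistandard images: one must show that these very particular relations --- adjacent pairs of rows, with all shifts $t$ --- already suffice to reach semistandard form, and that no additional collapse occurs inside $\Delta(\lambda)$. Both are characteristic-free statements, and I would organise them by induction on the number of rows $m$, reducing the general presentation to the two-row case settled by the comultiplication computation above. (Alternatively one could deduce everything from the general machinery of Schur complexes, but the tableau approach is more elementary and self-contained.)
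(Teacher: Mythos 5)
This theorem is not proved in the paper at all: it is quoted verbatim from Akin--Buchsbaum--Weyman (Theorem II.3.16 of \cite{ABW}), so the only fair comparison is with the proof in that source, and your outline is essentially that proof: show $d'_\lambda\circ\square_\lambda=0$ by reduction to two consecutive rows, then show the induced surjection $D(\lambda)/\operatorname{im}\square_\lambda\to\Delta(\lambda)$ is injective by combining a straightening law (every row-standard filling is, modulo the box relations, a combination of standard tableaux) with the linear independence of the standard images. Two remarks on the details you leave open. First, the two ingredients you invoke are exactly Lemma II.2.15 and Theorem II.2.16 of \cite{ABW} (the latter is the paper's Theorem 3.3), and since \cite{ABW} prove the standard basis theorem independently of the presentation, there is no circularity in using it for the dimension count; equivalently, your ``$\mathbb{Z}$-form with Schur character'' formulation is fine. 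Second, the two places where your sketch is looser than the actual argument: the two-row vanishing is not literally a pairwise cancellation of shuffles --- in the divided-power setting it comes down to the vanishing of the composite $D_2V\to V\otimes V\to\Lambda^2V$, which also kills the diagonal terms $x^{(2)}\mapsto x\otimes x\mapsto x\wedge x=0$, so it is characteristic-free as you say; and the straightening induction does not obviously decrease the number of column violations or the lexicographic order of column words --- \cite{ABW} induct on the partial order defined by the quantities $U_{a,b}$ (the number of entries $\le b$ in the first $a$ rows), and with that order the box relation for the offending pair of adjacent rows does produce strictly smaller tableaux. With those standard adjustments your plan is a correct, self-contained reconstruction of the cited proof; it buys nothing new over the citation, but it is the right argument.
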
 

\begin{remark}
	Let $\{e_1,...,e_n\}$ be the natural basis of $V=K^n$. According to \cite{AB}, equation (5) of Section 2, each $M_i(t)$ is a cyclic $S$-module generated by \[e_1^{(\la_1)} \otimes \cdots \otimes e_i^{(\la_i+t)} \otimes e_{i+1}^{(\la_{i+1}-t)} \otimes \cdots \otimes e_n^{(\la_n)}.\] Hence in order to show that a map of $S$-modules $\phi : D(\la) \to \Delta (\mu)$ induces a map $\Delta(\la) \to \Delta(\mu)$, it suffices to show that \[\phi(x_{i,t})=0\] for all $i=1,..,m-1$ and $t=1,...,\la_{i+1}$, where \[x_{i,t}=e_1^{(\la_1)} \otimes \cdots \otimes e_i^{(\la_i)} \otimes e_i^{(t)} e_{i+1}^{(\la_{i+1}-t)} \otimes \cdots \otimes e_n^{(\la_n)}.\]
\end{remark}

\subsection{Standard basis of $\Delta(\mu)$} We will record here a fundamental fact from \cite{ABW}

Consider the order $e_1<e_2<...<e_n$ on the natural basis $\{e_1,...,e_n\}$ of $V=K^n$. We will denote each element $e_i$ by its subscript $i$. For a partition $\mu=(\mu_1,...,\mu_n) \in \we^+(n,r)$, a tableau of shape $\mu$ is a filling of the diagram of $\mu$ with entries from $\{1,...,n\}$. The set of tableaux of shape $\mu$ will be denoted by $\mathrm{Tab}(\mu)$. 

A tableau is called \textit{standard} if the entries are weakly increasing across the rows from left to right and strictly increasing in the columns from top to bottom. (The terminology used in \cite{ABW} is 'co-standard'). The set of standard tableaux of shape $\mu$ will be denoted by $\mathrm{Std}(\mu)$.

The \textit{weight} of a tableau $T$ is the tuple $\alpha=(\alpha_1,...,\alpha_n)$, where $\alpha_i$ is the number of appearances of the entry $i$ in $T$. The subset of $\mathrm{Std}(\mu)$ consisting of the (standard) tableaux of weight $\alpha$ will be denoted by $\mathrm{Std}_{\alpha}(\mu).$ 

For example, let $n=5$. The following tableau of shape $\mu=(6,4,2,0,0)$ 
\begin{center}
$T=$
\begin{ytableau}
\ 1&1&1&2&2&4\\
\ 2&2&3&4\\
\ 2&5
\end{ytableau}
\end{center}
is not standard because of the violation in the first column. It has weight $\alpha=(3,5,1,2,1)$.

 We will use 'exponential' notation for tableaux. Thus for the above example we write \[ T=\begin{matrix*}[l]
1^{(3)}2^{(2)}4 \\
2^{(2)}34\\
25 \end{matrix*}. \]

To each tableau $T$ of shape $\mu=(\mu_1, ...,\mu_n)$ we may associate an element $x_T$  in $D(\mu)=D(\mu_1,...,\mu_n)$ \[x_T=x_T(1) \otimes \cdots \otimes x_T(n) \in D(\mu_1,...,\mu_n),\] where $x_T(i)=1^{(a_{i1})}...n^{(a_{in})}$ and $a_{ij}$ is equal to the number of appearances of $j$ in the $i$-th row of $T$. For $T$ in the previous example we have $x_T=1^{(3)}2^{(2)}4\otimes2^{(2)}34 \otimes 25$.
According to \cite{ABW}, Theorem II.2.16, we have the following. \begin{theorem}[\cite{ABW}]The set $\{d'_{\mu}(x_T): T \in \mathrm{Std}(\mu)\}$ is a basis of the $K$-vector space $\Delta({\mu})$.\end{theorem}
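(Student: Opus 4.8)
The last displayed result in the excerpt is the standard basis theorem attributed to \cite{ABW}, Theorem II.2.16: for a partition $\mu \in \we^+(n,r)$, the set $\{d'_{\mu}(x_T): T \in \mathrm{Std}(\mu)\}$ is a $K$-basis of $\Delta(\mu)$.

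\medskip

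\textbf{Plan of proof.} The plan is to prove this in two halves: spanning and linear independence. For the spanning half, I would introduce on the set $\mathrm{Tab}(\mu)$ of all tableaux of shape $\mu$ a total order that refines the following partial comparison: working column by column (say, processing the columns of $\mu$ from left to right), compare two tableaux first by the content of their first column read as a multiset, using a suitable lexicographic rule on sorted column entries, then break ties by the second column, and so on. The key straightening step is then: if $T$ is \emph{not} standard, there is a column of $T$ in which some entry in row $i$ is strictly greater than the entry directly below it in row $i+1$ (a ``column violation'' forced by the weakly-increasing-rows convention in the ABW co-standard setup; if rows were not weakly increasing one would first invoke the shuffle/Garnir-type relations within a single $D_{\m_i}V$ factor to make them so). Using the defining relations of $\Delta(\mu)$ — concretely the image description $\Delta(\mu) = \mathrm{im}(d'_\mu)$ together with the generators-and-relations presentation recalled in Theorem 3.1, i.e. the composite $\Delta \otimes \eta$ ``place-permutation'' relations coming from comultiplication in one divided-power factor followed by multiplication into an adjacent factor — I would rewrite $d'_\mu(x_T)$ as a $\mathbb{Z}$-linear (in fact $\pm 1$ and binomial-coefficient) combination of $d'_\mu(x_{T'})$ where each $T'$ is strictly smaller in the chosen order; iterating, every $d'_\mu(x_T)$ lies in the span of the standard ones. (Equivalently: $\Lambda(\mu^t)$ has an obvious monomial basis, the straightening in $\Lambda$ is classical, and one transports it back through $d'_\mu$.)

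\medskip

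\textbf{Linear independence.} For independence I would use a weight/character argument combined with a dimension count that is independent of the field. Each $x_T$, hence each $d'_\mu(x_T)$, is a weight vector for the diagonal torus of $GL_n(K)$ of weight $= \mathrm{wt}(T)$, so it suffices to fix a weight $\al$ and show $\{d'_\mu(x_T): T \in \mathrm{Std}_\al(\mu)\}$ is independent in the $\al$-weight space $\Delta(\mu)_\al$. Over $K = \mathbb{Q}$ (or any characteristic-zero field) this is classical: $\Delta(\mu)_{\mathbb{Q}}$ is the irreducible $GL_n$-module of highest weight $\mu$, its $\al$-weight space has dimension equal to the Kostka number $= \#\mathrm{Std}_\al(\mu)$ (semistandard tableaux, matching the ABW convention after transposition), and there the $d'_\mu(x_T)$ are known to be a basis. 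The point is then a base-change argument: $\Delta_{\mathbb{Z}}(\mu)$ (defined over $\mathbb{Z}$ by the same map $d'_\mu$ between the integral forms $D_{\mathbb{Z}}(\mu) \to \Lambda_{\mathbb{Z}}(\mu^t)$) is a $\mathbb{Z}$-lattice, $\Delta(\mu) \cong \Delta_{\mathbb{Z}}(\mu) \otimes_{\mathbb{Z}} K$ for the Schur algebra $S_K(n,r)$ over any $K$, and the elements $x_T$ are defined integrally. The spanning half above shows $\{d'_\mu(x_T): T \in \mathrm{Std}(\mu)\}$ spans $\Delta_{\mathbb{Z}}(\mu)$ as a $\mathbb{Z}$-module; the characteristic-zero case shows, after tensoring with $\mathbb{Q}$, that this spanning set is a $\mathbb{Q}$-basis, hence has exactly $\#\mathrm{Std}(\mu)$ elements and, being a $\mathbb{Z}$-spanning set of a free $\mathbb{Z}$-module of that rank, is a $\mathbb{Z}$-basis of $\Delta_{\mathbb{Z}}(\mu)$. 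Tensoring with $K$ preserves this, giving the theorem over an arbitrary infinite field.

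\medskip

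\textbf{Main obstacle.} The delicate point is the straightening algorithm and, in particular, proving that it \emph{terminates} — i.e. exhibiting a total order on tableaux under which every application of an ABW relation to a non-standard $x_T$ produces only strictly smaller terms. One must choose the order carefully (the naive lexicographic order on row-reading words can fail to be monovariant under the comultiplication-then-multiplication relations), check that a genuinely non-standard tableau always admits a relation lowering it, and verify the terminating order is the same one in which standard tableaux are the minimal elements of each ``straightening class.'' A secondary technical point is making the $\mathbb{Z}$-form bookkeeping precise: that $D_{\mathbb{Z}}(\mu) \to \Lambda_{\mathbb{Z}}(\mu^t)$ really has image a $\mathbb{Z}$-pure submodule compatible with base change to $K$ — this is part of the ABW development and may simply be cited, but it should be acknowledged. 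If one prefers to avoid the integral-lattice route entirely, the alternative is a direct combinatorial proof of independence over $K$: produce for each standard $\al$-tableau a ``dominating monomial'' in $\Lambda(\mu^t)$ occurring in $d'_\mu(x_T)$ with coefficient $\pm 1$ and in no $d'_\mu(x_{T'})$ for $T'$ larger, giving a unitriangular change-of-basis matrix — but verifying that unitriangularity cleanly is itself the same combinatorial heart of the matter.
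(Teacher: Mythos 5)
The paper does not actually prove this statement: Theorem 3.3 is quoted verbatim from Akin--Buchsbaum--Weyman (Theorem II.2.16 of \cite{ABW}), so there is no in-paper argument to compare yours against. Judged on its own merits, your outline has the same shape as the ABW proof: spanning via a straightening algorithm that rewrites a non-standard tableau as a combination of strictly smaller ones in a well-founded order (ABW use the order determined by the counts $U_{a,b}$ of entries $\le b$ occurring in the first $a$ rows, and the relations $d'_{\mu}\circ\bar\square=0$ arising from the comultiplication-then-multiplication maps of Theorem 3.1), followed by an independence argument. Your worry about termination is legitimate but is resolved by using ABW's order rather than a lexicographic order on reading words.

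The one substantive gap is in your primary route to linear independence. Writing $\Delta_{\mathbb{Z}}(\mu)$ for the image of $d'_{\mu}$ over $\mathbb{Z}$, the natural map $\Delta_{\mathbb{Z}}(\mu)\otimes_{\mathbb{Z}}K\to\Delta_{K}(\mu)$ is surjective but not obviously injective: formation of the image of a map of free modules does not commute with base change unless the cokernel is flat, and that flatness (equivalently, purity of the image inside $\Lambda_{\mathbb{Z}}(\mu^{t})$) is essentially the universal freeness being proved, so ``simply citing'' it is circular. The clean repair is the alternative you mention at the end: exhibit, for each standard $T$, a leading monomial of $d'_{\mu}(x_{T})$ in the monomial basis of $\Lambda(\mu^{t})$ with coefficient $\pm1$, occurring in no $d'_{\mu}(x_{T'})$ for $T'$ preceding $T$, so that the transition matrix is unitriangular over $\mathbb{Z}$; this yields independence over every $K$ at once and makes the characteristic-zero detour unnecessary. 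This unitriangularity is in fact how \cite{ABW} argue, so with that substitution your sketch is correct in outline.
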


If $T \in \mathrm{Tab}(\mu)$, we will denote the element $d'_{\mu}(x_T) \in \Delta(\mu)$ by $[T].$

\subsection{Weight subspaces of $\Delta(\mu)$} Let $\nu \in \we(n,r)$ and  $\mu=(\mu_1,...,\mu_n) \in \we^+(n,r)$. According to \cite{AB}, equation (11) of Section 2, a basis of the $K$-vector space  \[\Hom_S(D(\nu), \Delta(\mu))\] is in 1-1 correspondence with set $\mathrm{Std}_{\nu}(\mu)$ of standard tableaux of shape $\mu$ and weight $\nu$. For the computations to follow, we need to make this correspondence explicit. 

Let $\al = (\al_1,..., \al_n) \in \we(n,r)$ and $T \in \mathrm{Std}_{\al}(\mu)$. Let $a_{ij}$ be the number of appearances of $j$ in the $i$-th row  of $T$. Since $T$ is standard, we have $a_{ij}=0$ if $i>j$. Hence $\al_j = \sum_{i \le j}a_{ij}$ for each $j$. In particular we have $\al_1=a_{11}$. 

For each $j=1,...,n$ consider the indicated component \begin{align*}\Delta:& D(\al_j) \to D(a_{1j},a_{2j},...,a_{jj}), \\&x \mapsto \sum_{s}x_s(a_{1j}) \otimes x_s(a_{2j}) \otimes \cdots \otimes x_s(a_{jj})\end{align*} of the comultiplication map of the Hopf algebra $DV$.
\begin{definition}
	With the previous notation, let $T \in \mathrm{Std}_{\al}(\mu)$. Define the map $\phi_T:D(\al) \to \Delta(\mu),$ \begin{align*} &x_1 \otimes x_2 \otimes ...\otimes x_n \mapsto \\& \sum_{s_2,...,s_n}d'_{\mu}\left(x_1x_{2s_2}(a_{12})...x_{ni_n}(a_{1n}) \otimes x_{2i_2}(a_{22})...x_{ni_n}(a_{2n}) \otimes \cdots \otimes x_{ns_n}(a_{nn}) \right). \end{align*}
\end{definition}
\begin{example}
	Let $\al=(a,4,2)$ and $\mu=(a+2,4)$, where $a \ge 2$. Let $x=1^{(a-2)}\otimes 1^{(2)}2^{(2)} \otimes 3^{(2)} \in D(a-2,4,2)$ and $T \in \mathrm{Std}_{\al}(\mu)$, where \[T= \begin{matrix*}[l]
		1^{(a)}2^{(2)} \\
		2^{(2)}3^{(2)} \end{matrix*} .\]
	The image of $x_2=1^{(2)}2^{(2)}$ under the map $\Delta: D(4) \to D(2,2)$ is $1^{(2)} \otimes 2^{(2)}+12 \otimes 12 +2^{(2)} \otimes 1^{(2)}$. Then the above definition yields \[\phi_T(x)=\binom{a+2}{2}\begin{bmatrix*}[l]
		1^{(a+2)} \\
		2^{(2)}3^{(2)} \end{bmatrix*}+\binom{a+1}{1}\begin{bmatrix*}[l]
		1^{(a+1)} 2\\
		123^{(2)} \end{bmatrix*}+ \begin{bmatrix*}[l]
		1^{(a)} 2^{(2)}\\
		1^{2}3^{(2)} \end{bmatrix*},\]
	where the binomial coefficients come from multiplication in the divided power algebra $DV$.
\end{example}
 \begin{proposition}[\cite{AB}] Let $\al \in \we(n,r)$ and $\mu \in \we^+(n,r)$. A basis of the $K$-vector space  $\Hom_S(D(\al), \Delta(\mu))$ is the set \[\{\phi_T: T \in \mathrm{Std}_{\al}(\mu)\}.\]
\end{proposition}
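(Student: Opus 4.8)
The plan is to compute $\Hom_S(D(\al),\Delta(\mu))$ by evaluating homomorphisms at a distinguished generator of $D(\al)$ and matching the result against the standard basis of the $\al$-weight space of $\Delta(\mu)$. Write $\xi_\al := e_1^{(\al_1)} \otimes e_2^{(\al_2)} \otimes \cdots \otimes e_n^{(\al_n)} \in D(\al)$. By \cite{AB} (see also Remark 3.2), $D(\al)$ is a cyclic $S$-module generated by $\xi_\al$, so the evaluation map
\[
\mathrm{ev}\colon \Hom_S(D(\al),\Delta(\mu)) \longrightarrow \Delta(\mu), \qquad \phi \longmapsto \phi(\xi_\al),
\]
is injective; and since $\xi_\al$ has weight $\al$ and every $S$-module homomorphism preserves weights, its image lies in the weight subspace $\Delta(\mu)_\al$. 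In particular $\dim_K \Hom_S(D(\al),\Delta(\mu)) \le \dim_K \Delta(\mu)_\al$. (One can in fact show $\mathrm{ev}$ is an isomorphism onto $\Delta(\mu)_\al$, using $D(\al) \cong S\xi_\al$ for the weight-$\al$ idempotent of $S$, but only the inequality above will be needed.)

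Next I would check that each map $\phi_T$ of Definition 3.4 is a well-defined element of $\Hom_S(D(\al),\Delta(\mu))$. The point is that standardness of $T$ forces $a_{ij}=0$ whenever $i>j$, so that $\sum_{i} a_{ij} = \al_j$ and $\sum_{j} a_{ij} = \mu_i$, and that $\phi_T$ is visibly the composite of the $S$-linear maps
\[
D(\al) = \bigotimes_j D(\al_j) \xrightarrow{\ \bigotimes_j \Delta\ } \bigotimes_j\bigl(D(a_{1j}) \otimes \cdots \otimes D(a_{jj})\bigr) \longrightarrow \bigotimes_i D\Bigl(\textstyle\sum_j a_{ij}\Bigr) = D(\mu) \xrightarrow{\ d'_\mu\ } \Delta(\mu),
\]
where the first arrow is a tensor product of comultiplication components of $DV$, the middle arrow regroups the tensor factors according to their row index $i$ and multiplies them using the (commutative) product of $DV$, and the last arrow is the defining surjection of $\Delta(\mu)$. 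Each of these is $GL_n(K)$-equivariant, hence $S$-linear.

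The heart of the matter is the identity $\phi_T(\xi_\al) = [T]$ for every $T \in \mathrm{Std}_\al(\mu)$. Since all letters of $e_j^{(\al_j)} = j^{(\al_j)}$ are equal to $j$, the component of its comultiplication lying in $D(a_{1j}) \otimes \cdots \otimes D(a_{jj})$ is the single term $j^{(a_{1j})} \otimes \cdots \otimes j^{(a_{jj})}$, with coefficient $1$; after regrouping by rows and multiplying, the $i$-th tensor factor becomes $\prod_j j^{(a_{ij})} = 1^{(a_{i1})} 2^{(a_{i2})} \cdots n^{(a_{in})}$, a product of divided powers in distinct variables (again coefficient $1$), which is exactly $x_T(i)$. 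Hence $\phi_T(\xi_\al) = d'_\mu(x_T) = [T]$. Now invoke Theorem 3.3: the elements $[T]$, $T \in \mathrm{Std}(\mu)$, form a $K$-basis of $\Delta(\mu)$, and since $d'_\mu$ preserves weights this basis is graded by weight, so $\{[T] : T \in \mathrm{Std}_\al(\mu)\}$ is a basis of $\Delta(\mu)_\al$; in particular $|\mathrm{Std}_\al(\mu)| = \dim_K \Delta(\mu)_\al$. As the $[T]$ are linearly independent and equal the images $\mathrm{ev}(\phi_T)$, the maps $\phi_T$ with $T \in \mathrm{Std}_\al(\mu)$ are linearly independent in $\Hom_S(D(\al),\Delta(\mu))$; combined with the inequality $\dim_K \Hom_S(D(\al),\Delta(\mu)) \le |\mathrm{Std}_\al(\mu)|$ from the first paragraph, this forces them to be a basis.

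I do not expect a genuine obstacle here, since the statement is classical and the argument is essentially mechanical; the step requiring the most care is the identity $\phi_T(\xi_\al)=[T]$ together with the bookkeeping it rests on, namely reconciling the index ranges in the formula of Definition 3.4 with the rows of $\mu$ via standardness of $T$, and confirming that the comultiply--regroup--multiply composite really lands in $D(\mu)$ so that $d'_\mu$ may be applied. If one wanted a self-contained treatment rather than citing \cite{AB}, the cyclicity of $D(\al)$ on $\xi_\al$ would also need a brief elementary argument (expand $g\cdot\xi_\al$ for generic $g\in GL_n(K)$ and separate weight vectors, using that $K$ is infinite).
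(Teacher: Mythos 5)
Your proof is correct. Note that the paper does not prove this proposition at all — it is quoted from Akin--Buchsbaum \cite{AB} (equation (11) of Section 2 there), with Definition 3.4 only making the correspondence $T \mapsto \phi_T$ explicit — so there is no in-paper argument to compare against. What you have written is essentially the standard argument underlying the citation: evaluation at the cyclic generator $\xi_\al$ identifies $\Hom_S(D(\al),\Delta(\mu))$ with the weight space $\Delta(\mu)_\al$, the computation $\phi_T(\xi_\al)=[T]$ is right (the comultiplication component of $j^{(\al_j)}$ and the products of divided powers in distinct variables all carry coefficient $1$), and the weight-graded standard basis of Theorem 3.3 then gives both linear independence and the dimension count. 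The only ingredient you lean on without full proof is the cyclicity of $D(\al)$ on $\xi_\al$ (equivalently $D(\al)\simeq Se_\al$), but you flag this honestly and it is exactly the fact the paper itself imports in Remark 3.2, so the argument is complete at the level of rigor the paper operates at.
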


\subsection{Binomial coefficients mod p}

We will need the following well known property of binomial coefficients. It follows immediately, for example, from Lucas' theorem (which is Lemma 22.4 of \cite{Jam}).
\begin{lemma}\label{binom}
Let $p$ be a prime and $a,b, k$ nonnegative integers. If $d$ is a positive integer such that $p^{d}>b$, then \[\binom{a+kp^{d}}{b} \equiv \binom{a}{b} \mod p.\]
\end{lemma}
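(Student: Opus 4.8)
The plan is to give a short self-contained argument via generating functions in $\mathbb{F}_p[x]$, which is just Lucas' theorem specialized to the present situation. The single fact I will use is that for any nonnegative integer $m$ the residue $\binom{m}{b}\bmod p$ is the coefficient of $x^b$ in the polynomial $(1+x)^m\in\mathbb{F}_p[x]$.

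First I would record the Frobenius identity: since $p$ is prime, $\binom{p}{i}\equiv 0\pmod p$ for $0<i<p$, so $(1+x)^p=1+x^p$ in $\mathbb{F}_p[x]$, and iterating this $d$ times gives $(1+x)^{p^d}=1+x^{p^d}$. Consequently, in $\mathbb{F}_p[x]$,
\[
(1+x)^{a+kp^d}=(1+x)^a\bigl((1+x)^{p^d}\bigr)^k=(1+x)^a(1+x^{p^d})^k=(1+x)^a\sum_{j\ge 0}\binom{k}{j}x^{jp^d}.
\]
The second step is to compare the coefficient of $x^b$ on the two sides. On the left it is $\binom{a+kp^d}{b}\bmod p$. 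On the right, every contribution coming from a term $x^{jp^d}$ with $j\ge 1$ has degree at least $p^d$, which by hypothesis exceeds $b$; hence only the $j=0$ summand can contribute, and it contributes the coefficient of $x^b$ in $(1+x)^a$, namely $\binom{a}{b}\bmod p$. This gives the claimed congruence.

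There is essentially no obstacle here; the only place where the hypotheses enter is the inequality $p^d>b$, used precisely to discard the terms $x^{jp^d}$ with $j\ge 1$ as lying above degree $b$. As the excerpt notes, one may alternatively deduce the statement directly from Lucas' theorem (Lemma 22.4 of \cite{Jam}): since $p^d>b$, the base-$p$ digits of $b$ in positions $\ge d$ vanish, and since $a+kp^d\equiv a\pmod{p^d}$, the integers $a$ and $a+kp^d$ have the same base-$p$ digits in positions $0,\dots,d-1$; Lucas then matches the two binomial coefficients digit by digit modulo $p$. Either route is short enough to write out in full.
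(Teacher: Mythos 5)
Your argument is correct. Note, though, that the paper does not actually write out a proof: it simply observes that the lemma ``follows immediately from Lucas' theorem (Lemma 22.4 of \cite{Jam})'', which is exactly the second route you sketch at the end (the digits of $b$ in positions $\ge d$ vanish because $p^d>b$, and $a$ and $a+kp^d$ share the digits in positions $0,\dots,d-1$, so Lucas matches the two products factor by factor). Your primary argument is genuinely different in flavor: instead of invoking Lucas as a black box, you derive the congruence directly from the Frobenius identity $(1+x)^{p^d}=1+x^{p^d}$ in $\mathbb{F}_p[x]$ and read off the coefficient of $x^b$, using $p^d>b$ to discard the terms $x^{jp^d}$ with $j\ge 1$. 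This buys self-containedness (no citation needed, and it is essentially a proof of the special case of Lucas that is actually used), at the cost of a few extra lines; the paper's choice buys maximal brevity by leaning on a standard reference. Both are complete and correct, and the hypothesis $p^d>b$ enters in exactly the same way in each.
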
 

\section{Proof of Theorem 2.1}
\subsection{Bijection of certain tableaux}
Consider positive integers  $n,r,k,d$. If $\al=(\al_1,...,\al_n) \in \we(n,r)$, let $\al^+=(\al+kp^d,\al_2,...,\al_n) \in \we(n, r+kp^d)$. If $T \in \mathrm{Tab}_{\al}(\mu)$, let $T^+ \in \mathrm{Tab}_{\al^+}(\mu^+)$ be obtained from $T$ by inserting $kp^d$ 1's in the beginning of the top row.

Recall from Proposition 3.6 that we have the basis elements $\phi_T$ of $ \Hom_S(D(\al),\Delta(\mu))$, where $T \in \sta$ . Let us denote by $\phi^+_{X}$ the corresponding basis elements of $ \Hom_{S'}(D(\al^+),\Delta(\mu^+))$, where $X \in \STa.$
\begin{lemma}Let $\al \in \we(n,r)$ and $\mu \in \we^+(n,r)$ such that $\mu_2 \le \al_1$. Then the maps \begin{align*}&\sta \to \STa, T \mapsto T^+\\ &\STa \to \sta, T^+ \mapsto T\end{align*}
	are inverses of each other and bijections. Hence the map  \begin{align*}\Hom_S(D(\la), \Delta(\mu)) &\to \Hom_{S'}(D(\la^+), \Delta(\mu^+)),\\ \sum_{T \in \st}c_T\phi_T &\mapsto \sum_{T \in \st}c_T\phi^+_{T^+}\end{align*}
	is an isomorphism.
\end{lemma}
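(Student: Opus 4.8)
The plan is to establish the bijection on tableaux first, and then deduce that the induced linear map on the Hom spaces is a well-defined isomorphism, since the latter essentially follows once the bijection is in place.

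\medskip

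First I would verify that if $T \in \sta$ then $T^+ \in \STa$, that is, that inserting $kp^d$ copies of $1$ at the left end of the top row preserves standardness. Standardness requires weak increase along rows and strict increase down columns. Along the top row, prepending $1$'s keeps it weakly increasing since $1$ is the minimal entry. For the columns, the only columns affected are the first $kp^d$ columns of the new tableau; in the new first row these all contain $1$, and in rows $i \ge 2$ the entries in these columns (if any) must be $\ge 2$ because $T$ was standard (no entry below the first row equals $1$), so the strict-increase-down-columns condition is preserved on those columns, while all other columns are unchanged. Here is where the hypothesis $\mu_2 \le \al_1$ enters: since $T$ has weight $\al$, the number of $1$'s in $T$ equals $\al_1$, and all of them lie in the top row (standardness again), so the top row of $T$ begins with $\al_1$ copies of $1$; after prepending we get $\al_1 + kp^d \ge \mu_2 + kp^d \ge \mu_2^+$ copies — wait, more simply, the condition $\mu_2 \le \al_1$ guarantees that the second row of $\mu^+$, which has length $\mu_2$, sits entirely under the block of $1$'s in the first row of $T^+$, so no column of $T^+$ in rows $1$ and $2$ has a repeated entry forced by the insertion. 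Thus $T \mapsto T^+$ lands in $\STa$. Conversely, if $X \in \STa$, then since $X$ has weight $\al^+ = (\al_1 + kp^d, \al_2, \dots, \al_n)$ and is standard, the first row of $X$ begins with exactly $\al_1 + kp^d$ copies of $1$, and $\al_1 + kp^d \ge kp^d$, so we may delete the first $kp^d$ columns; one checks the result $T$ has shape $\mu$ (using $\mu_2 \le \al_1$, i.e. $\mu_2 + kp^d \le \al_1 + kp^d$, so that the second row of $\mu^+$ lies within the deleted columns together with enough of the first row — actually one needs that every row of $\mu^+$ below the first either is entirely within the first $kp^d$ columns or… let me reconsider). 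The cleanest statement: because $\mu_2 \le \al_1 \le \al_1 + kp^d$, rows $2, 3, \dots$ of $\mu^+$ have length $\le \al_1 < \al_1 + kp^d$, hence they are unaffected in shape by deleting the last $\ldots$ — no: we delete the \emph{first} $kp^d$ columns. Since $\mu_2 = \mu_2^+ \le \al_1$ could be larger than $kp^d$ or smaller, I should instead argue directly that the map $T \mapsto T^+$ is injective (obvious) and that its image is all of $\STa$ by exhibiting the inverse as "delete the first $kp^d$ columns and check it is a valid tableau of shape $\mu$ and weight $\al$", which works precisely because in any $X \in \STa$ the entries of the first $kp^d$ columns are: all $1$'s in row $1$ (there are $\ge kp^d$ of them), and in lower rows entries $\ge 2$; deleting these columns removes $kp^d$ boxes from row $1$ only (rows $i \ge 2$ of $\mu^+$ have length $\mu_i \le \mu_2 \le \al_1$, but this does not by itself say $\mu_i \le kp^d$) — hmm, if some $\mu_i$ with $i \ge 2$ exceeds $kp^d$ then deleting the first $kp^d$ columns \emph{does} shorten row $i$. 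So the correct shape statement needs $\mu_i \le kp^d$ for $i \ge 2$? That is not assumed. Let me recheck the definition: $T^+$ is obtained by \emph{inserting} $kp^d$ $1$'s at the beginning of the top row, so $\mu^+ = (\mu_1 + kp^d, \mu_2, \dots)$ and the new boxes are in positions $(1,1), \dots, (1, kp^d)$ of $\mu^+$, shifting the old top row to the right. The inverse is then "remove the leftmost $kp^d$ boxes of row $1$ and shift back", equivalently remove columns $1, \dots, kp^d$ \emph{restricted to row $1$}. For this to make sense as a column-deletion on the whole diagram we genuinely need the block of $1$'s in row $1$ of $X$ to be at least as wide as any obstruction — and the cleanest route is: don't phrase it as column deletion at all. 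Phrase the inverse as: given $X \in \STa$, its first row is $1^{(\al_1 + kp^d)} \cdots$; replace it by $1^{(\al_1)} \cdots$ (remove $kp^d$ of the $1$'s), keeping all other rows. The result $T$ has shape $(\mu_1, \mu_2, \dots) = \mu$ and weight $\al$, and is standard: rows still weakly increase; for columns, we only shortened row $1$, and a shorter row $1$ can only \emph{help} strict increase in the columns that remain under it, while columns that lose their row-$1$ entry entirely (the last $kp^d$ columns of the old row $1$) — those columns now start at row $2$, which is fine since row $2$ was already strictly below row $1$'s entries, in particular the remaining structure among rows $2, 3, \dots$ is untouched and was already column-strict. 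So the only thing to check for column-strictness of $T$ is that in columns $j$ with $j \le \mu_1$ (the surviving part of row $1$) we have (entry in row $1$, which is $1$) $<$ (entry in row $2$, which is $\ge 2$): true. Hence $T \in \sta$. This is the main obstacle — getting the bookkeeping of which boxes move and why standardness survives exactly right — but it is entirely elementary.

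\medskip

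Once the bijection $T \leftrightarrow T^+$ between $\sta$ and $\STa$ is established, the displayed map
\[
\Hom_S(D(\la), \Delta(\mu)) \to \Hom_{S'}(D(\la^+), \Delta(\mu^+)), \qquad \sum_{T \in \st} c_T \phi_T \mapsto \sum_{T \in \st} c_T \phi^+_{T^+},
\]
is, by Proposition 3.6, nothing but the linear map sending one basis to another basis (noting $\st = \mathrm{Std}_{\la}(\mu) = \sta$ with $\al = \la$, and $\ST = \STa$): $\{\phi_T : T \in \st\}$ is a basis of the source, $\{\phi^+_{X} : X \in \ST\}$ is a basis of the target, and $T \mapsto T^+$ is a bijection $\st \to \ST$. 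A linear map carrying a basis bijectively to a basis is an isomorphism. So after the combinatorial bijection is nailed down, the ``Hence'' clause is immediate and requires no further computation — in particular this lemma does not yet use the hypothesis $p^d > \min\{\la_2, \mu_1 - \la_1\}$; that hypothesis will be needed later to show this isomorphism (or its restriction) carries $\Hom_S(\Delta(\la), \Delta(\mu))$, viewed inside $\Hom_S(D(\la), \Delta(\mu))$ via the presentation of Theorem 3.1, onto $\Hom_{S'}(\Delta(\la^+), \Delta(\mu^+))$.
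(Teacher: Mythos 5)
Your proof is correct and follows essentially the same route as the paper: the forward map clearly preserves standardness, the inverse is deletion of $kp^d$ ones from the first row, the hypothesis $\mu_2 \le \al_1$ is used exactly where the paper uses it (to ensure the truncated tableau is still column-strict because every box of the second row lies below a $1$), and the ``Hence'' clause is the observation that a basis is carried bijectively to a basis. One caveat: your intermediate slogan that shortening the first row ``can only help'' column-strictness is backwards (deletion shifts weakly larger entries leftward, so it can only hurt), and the relevant columns are $j\le\mu_2$ rather than $j\le\mu_1$; but your final concrete check --- that the entries of row one in all columns $1,\dots,\mu_2$ equal $1$ precisely because $\mu_2\le\al_1$ --- is the correct argument and is what actually carries the proof.
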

\begin{proof} From the definition of standard tableau, it is clear that if $T \in \sta$, then $T^+ \in \STa.$ It is also clear that the first map of the lemma is injective.
	
	If $X \in \STa$, then the first row of $X$ contains $\al_1+kp^d$ 1's. Let $T \in \mathrm{Tab}_{\al}(\mu)$ be the tableau obtained from $X$ by deleting $kp^d$ 1's. Since $\mu_2 \le \al_1$, we have that $T$ is standard, i.e. $T \in \sta$. It is clear that $T^{+}=S$. \end{proof}
 We will show that the isomorphism of the above lemma induces an isomorphism \[\Hom_S(\Delta(\la), \Delta(\mu)) \simeq \Hom_{S'}(\Delta(\la^+), \Delta(\mu^+))\]
if $p^d >\min\{\la_2, \mu_1-\la_1\}$. In order to accomplish this, we will need to do computations with particular tableaux in which 1's appear only in the first row and the number of occurrences is at least $\la_1$.
\subsection{A key corollary}
\begin{definition}
	Let $A$ and $A^+$ be the subsets of $\mathrm{Tab}(\mu)$ and $\mathrm{Tab}(\mu^+)$ respectively consisting of all tableaux of the form \[ \begin{matrix*}[l]
		1^{(\la_1+t)}2^{(a_{12})}3^{(a_{13})} \cdots n^{(a_{1n})} \\
		2^{(a_{22})}3^{(a_{23})} \cdots n^{(a_{2n})} \\
		3^{(a_{33})} \cdots n^{(a_{3n})} \\
		\cdots\\
		n^{(a_{nn})}  \end{matrix*}, \;\; \;\; \begin{matrix*}[l]
		1^{(\la_1+kp^d+t)}2^{(a_{12})}3^{(a_{13})} \cdots n^{(a_{1n})} \\
		2^{(a_{22})}3^{(a_{23})} \cdots n^{(a_{2n})} \\
		3^{(a_{33})} \cdots n^{(a_{3n})} \\
		\cdots\\
		n^{(a_{nn})}  \end{matrix*}, \]
respectively, where $0 \le t \le \la_2$ and $a_{ij}$ are nonnegative integers.
\end{definition}
\begin{remarks} (1) We note that $\st \subseteq A$ and $\ST \subseteq A^+$ (since $t=0$ is allowed).\\
(2) It is clear that the map $A \to A^+, U \mapsto U^+$, is a bijection. Hence a typical element in the linear span of the set $\{[S] \in \Delta(\mu^+): S \in A^+\}$ may be written in the form  $\sum_{U \in A}c_U[U^+]$, where $c_U \in K$.
\end{remarks}

 \begin{lemma} Let $U \in A$. Write \begin{align*}
 		[U]&=\sum_{T \in \mathrm{Std}(\mu)}c_T[T], \; \text{and} \\
 		[U^+]&=\sum_{T \in \mathrm{Std}(\mu)}c_{T^+}[T^+], 
 	\end{align*}
 in $\Delta(\mu)$ and $\Delta(\mu^+)$ respectively, where $c_T, c_{T^+} \in K$. Then $c_T=c_{T^+}$ for all $T \in \mathrm{Std}(\mu)$.
 \end{lemma}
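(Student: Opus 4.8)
The idea is to show that expressing $[U]$ in terms of standard tableaux and expressing $[U^+]$ in terms of standard tableaux are governed by the \emph{same} straightening coefficients, because the straightening algorithm only ever manipulates entries $\ge 2$ in rows $2,\dots,n$, and never touches the block of $1$'s in the first row except to change its length. Concretely, I would proceed by induction on a suitable statistic measuring how far $U$ is from being standard (for instance, the number of columns in which strict increase fails, together with a lexicographic measure on the offending entries), the base case being $U \in \mathrm{Std}(\mu)$, where the claim is trivial since $U^+ \in \mathrm{Std}(\mu^+)$ as well.

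\textbf{Key steps.} First I would recall, from Theorem 3.1 (the ABW presentation) and Remark 3.2, the explicit shape of the straightening relations in $\Delta(\mu)$: the relations $\square_\mu$ come from applying comultiplication-then-multiplication at a pair of adjacent rows $i,i+1$, and the resulting identity $[U] = \sum c_V [V]$ (with $V$ closer to standard) is obtained by ``shuffling'' some entries from row $i+1$ up into row $i$. Crucially, whenever $U \in A$ — so that all $1$'s sit in row $1$ and $\mu_2 \le \la_1 + t$ forces the first column to be strictly increasing at the top — the very first violation to be corrected occurs at a pair of rows $i, i+1$ with $i \ge 1$ but involving only entries $\ge 2$ being moved (the $1$'s in row $1$ never need to be moved, because row $2$ has no $1$'s and $\la_1 + t \ge \mu_2$ guarantees the $1$-block is already long enough). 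Therefore each tableau $V$ appearing on the right-hand side again has all its $1$'s confined to row $1$, i.e. $V \in A$, and the coefficient $c_V$ — a product of binomial coefficients coming from multiplication in $DV$ — depends only on the multiplicities $a_{ij}$ with $j \ge 2$ and on how entries $\ge 2$ redistribute, \emph{not} on the length of the $1$-block. Running the same relation on $U^+$ replaces the $1$-block of length $\la_1 + t$ by one of length $\la_1 + t + kp^d$ but produces exactly $\sum c_V [V^+]$ with the identical coefficients $c_V$. Since the statistic strictly decreases, induction finishes: $[U] = \sum_{T} c_T [T]$ and $[U^+] = \sum_T c_T [T^+]$ with the same $c_T$, and by Theorem 3.4 these are the unique standard-basis expansions, so $c_T = c_{T^+}$.

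\textbf{Main obstacle.} The delicate point is verifying that the binomial coefficients produced by the straightening relation for $U^+$ literally equal those for $U$, rather than merely being congruent mod $p$. When row $i = 1$ is one of the two rows involved, multiplying the shuffled entries back into row $1$ of $U$ produces factors like $\binom{\la_1 + t + (\text{stuff})}{\text{stuff}}$, whereas for $U^+$ one gets $\binom{\la_1 + kp^d + t + (\text{stuff})}{\text{stuff}}$; these are equal as elements of $K$ only because the ``$(\text{stuff})$'' being inserted consists entirely of entries $\ge 2$, so the relevant binomial coefficient does not actually see the first argument's full value — one must argue that in the comultiplication $D(\la_1+t) \to D(a_{11}, \dots)$ with $a_{11} = \la_1 + t$ the only nonzero term keeps the entire $1$-block intact, hence no genuine $\binom{\la_1+t}{\cdot}$ with a positive lower index arises. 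I expect this to require a careful bookkeeping of which component of $\square_\mu$ can possibly be the first to act on a tableau in $A$, using the hypothesis $\mu_2 \le \la_1$ (equivalently $\mu_2 \le \la_1 + t$ for all $t \ge 0$) to rule out any relation that would move a $1$ out of row $1$. Once that combinatorial fact is pinned down, the equality of coefficients — and hence of the two expansions — is immediate.
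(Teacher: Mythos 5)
Your argument is essentially correct but follows a genuinely different route from the paper. You run the straightening algorithm on $U$ step by step, checking at each stage that (i) no violation ever occurs between rows $1$ and $2$ (because $\mu_2\le\la_1\le\la_1+t$ and the first $\la_1+t$ entries of row $1$ are $1$'s while row $2$ has entries $\ge 2$), so (ii) every relation used involves only rows $i,i+1$ with $i\ge 2$, produces tableaux still lying in $A$ with the \emph{same} first row, and carries binomial coefficients depending only on the multiplicities in rows $\ge 2$ --- hence identical for $U$ and $U^+$. The paper instead short-circuits all of this: it deletes the first row, observes that $\overline{U}=\overline{U^+}$ so the standard expansion $[\overline{U}]=\sum c_Y[Y]$ in $\Delta(\overline{\mu})$ is literally the same object for both, and then reattaches the first row of $U$ (resp.\ $U^+$) to each $Y$, which yields standard tableaux precisely because $Y$ contains no $1$ and has first row of length $\mu_2\le\la_1$; the fact that this reattachment is compatible with straightening is outsourced to the last paragraph of the proof of Lemma II.2.15 in \cite{ABW}. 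Your version makes the coefficient-matching explicit and self-contained, at the cost of more bookkeeping; the paper's version is shorter but leans on the cited ABW lemma, whose content is essentially what you are verifying by hand. One detail you should repair: the termination statistic you propose (number of violating columns plus a lexicographic tie-break) need not strictly decrease under a single straightening step; the correct well-founded order is the one from \cite{ABW}, Definition II.2.13 and Lemma II.2.14 (the quantities $U_{a,b}$), which is exactly what guarantees that the tableaux appearing on the right-hand side of each relation are strictly smaller. With that substitution, and the observation that entries exchanged between rows $i$ and $i+1$ ($i\ge 2$) are all $\ge i+1$ so that $A$ is closed under the straightening steps, your induction goes through.
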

\begin{proof} First some notation. If $\nu =(\nu_1,...,\nu_n)$ is a partition and $T \in \mathrm{Tab}(\nu)$ is a tableau, let $\overline{\mu}=(\mu_2,...,\mu_n)$  be the partition obtained from $\mu$ by deleting the first part and let $\overline{T}$ be the tableau obtained from $T$ by deleting the first row. Note that $\overline{\mu}=\overline{\mu^{+}}$ and $\overline{T}=\overline{T^{+}}$. 
	
	Let $U \in A$. By Theorem 3.3 there exist standard tableaux $Y \in \mathrm{Std}(\overline{\mu})$ and coefficients $c_{Y} \in K$ such that \begin{equation} [\overline{U}] =[\overline{U^+}]=\sum c_{Y}[Y].\end{equation} Since $U \in A$, the tableaux $Y$ contain no $1$. Also, the first row of each $Y$ has length $\mu_2 \le \la_1$. Hence by attaching to the top of each $Y$ the top row 
	\[1^{(\la_1+t)}2^{(a_{12})}3^{(a_{13})} \cdots n^{(a_{1n})}\]
	of $U$, we obtain standard tableaux $T$ in 
	$\mathrm{Std}(\mu)$ such that $\overline{T}=Y$. Likewise, by attaching to the top of each $Y$ the top row 
	\[1^{(\la_1+t+kp^d)}2^{(a_{12})}3^{(a_{13})} \cdots n^{(a_{1n})}\]
	of $U^+$, we obtain the standard tableaux $T^+$ in 
	$\mathrm{Std}(\mu^+)$ and we have  $\overline{T^+}=Y$. Hence from equations (4.1) we obtain (as in the last paragraph of the proof of Lemma II.2.15 of \cite{ABW}) \begin{align*}
		[U]&=\sum_{T \in \mathrm{Std}(\mu)}c_{\overline{T}}[T], \; \text{and} \\
		[U^+]&=\sum_{T \in \mathrm{Std}(\mu)}c_{\overline{T}}[T^+].
	\end{align*}
	Since the sets $\{[T] \in \Delta(\mu): T \in \mathrm{Std}(\mu)\}$ and $\{[T^+] \in \Delta(\mu^+): T \in \mathrm{Std}(\mu)\}$  are linearly independent, the result follows.
	\end{proof}
\begin{cor}\label{cor}
	Let $c_U \in K$, where $U \in A$, and consider elements $\sum_{U \in A}c_U[U] \in \Delta(\mu)$ and $\sum_{U \in A}c_U[U^+] \in  \Delta(\mu^+)$. Then
	\[\sum_{U \in A}c_U[U] =0 \Leftrightarrow \sum_{U \in A}c_U[U^+] =0. \]
\end{cor}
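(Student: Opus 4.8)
The statement is the exact analogue, for linear combinations indexed by $A$ rather than by $\mathrm{Std}(\mu)$, of the equality of coefficients established in Lemma 4.3; so the plan is to reduce Corollary~\ref{cor} to that lemma by re-expanding everything in the standard basis. Concretely, take coefficients $c_U \in K$ for $U \in A$ and rewrite each $[U]$ in the standard basis of $\Delta(\mu)$ as $[U] = \sum_{T \in \mathrm{Std}(\mu)} c_{U,T}[T]$, and correspondingly $[U^+] = \sum_{T \in \mathrm{Std}(\mu)} c_{U,T^+}[T^+]$ in $\Delta(\mu^+)$. Lemma 4.3 gives $c_{U,T} = c_{U,T^+}$ for every $U$ and every $T$. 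Summing against $c_U$, we get
\[
\sum_{U \in A} c_U[U] = \sum_{T \in \mathrm{Std}(\mu)}\Bigl(\sum_{U \in A} c_U c_{U,T}\Bigr)[T],
\qquad
\sum_{U \in A} c_U[U^+] = \sum_{T \in \mathrm{Std}(\mu)}\Bigl(\sum_{U \in A} c_U c_{U,T}\Bigr)[T^+],
\]
where the two inner coefficients are literally the same scalar $\sum_{U \in A} c_U c_{U,T}$ because of Lemma 4.3.

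Now invoke Theorem 3.3 twice: $\{[T] : T \in \mathrm{Std}(\mu)\}$ is a $K$-basis of $\Delta(\mu)$, and $\{[T^+] : T \in \mathrm{Std}(\mu)\}$ is a linearly independent subset of $\Delta(\mu^+)$ (it is part of the standard basis of $\Delta(\mu^+)$, since $T \in \mathrm{Std}(\mu)$ implies $T^+ \in \mathrm{Std}(\mu^+)$ — indeed $\mu_2 \le \la_1$ is not even needed here, as prepending $1$'s to the first row of a standard tableau keeps it standard). Hence $\sum_{U \in A} c_U[U] = 0$ if and only if $\sum_{U \in A} c_U c_{U,T} = 0$ for all $T \in \mathrm{Std}(\mu)$, if and only if $\sum_{U \in A} c_U[U^+] = 0$. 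This is exactly the claimed equivalence.

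There is essentially no obstacle: the content is entirely in Lemma 4.3, and the corollary is just the observation that passing to a linear combination over $U$ commutes with the coefficient-preserving correspondence on standard-basis expansions. The one point to state carefully is the linear independence of $\{[T^+] : T \in \mathrm{Std}(\mu)\}$ inside $\Delta(\mu^+)$, which I would justify in one line by noting that $T \mapsto T^+$ maps $\mathrm{Std}(\mu)$ injectively into $\mathrm{Std}(\mu^+)$ and then applying Theorem 3.3.
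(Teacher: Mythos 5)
Your proof is correct and is essentially identical to the paper's own argument: expand each $[U]$ and $[U^+]$ in the standard bases via the coefficient-matching lemma (which is Lemma 4.4 in the paper's numbering, not 4.3), interchange the sums, and conclude by linear independence of $\{[T]\}$ and $\{[T^+]\}$. Your extra remark justifying the linear independence of $\{[T^+]:T\in\mathrm{Std}(\mu)\}$ as a subset of the standard basis of $\Delta(\mu^+)$ is a worthwhile clarification that the paper leaves implicit.
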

\begin{proof} According to the previous lemma and Theorem 3.3 we may write  \begin{align*}
		[U]&=\sum_{T \in \mathrm{Std}(\mu)}c_{U,T}[T], \; \text{and} \\
		[U^+]&=\sum_{T \in \mathrm{Std}(\mu)}c_{U,T}[T^+], 
	\end{align*}
	in $\Delta(\mu)$ and $\Delta(\mu^+)$ respectively, where $c_{U,T} \in K$. By substituting we have \[\sum_{U \in A}c_U[U] =\sum_{T \in \mathrm{Std}(\mu)}\left(\sum_{U \in A}c_Uc_{U,T}\right)[T] \]
	and \[\sum_{U \in A}c_U[U^+] =\sum_{T \in \mathrm{Std}(\mu)}\left(\sum_{U \in A}c_Uc_{U,T}\right)[T^+]. \] The result follows from the last two equations and the fact that the sets $\{[T] \in \Delta(\mu): T \in \mathrm{Std}(\mu)\}$ and $\{[T^+] \in \Delta(\mu^+): T \in \mathrm{Std}(\mu)\}$  are linearly independent.
	\end{proof}

\subsection{Proof ot Theorem 2.1}

First we record from \cite{MS2}, Lemma 4.2, the following lemma that is a particular case of the straightening law for Weyl modules with two parts.

\begin{lemma}Let $\mu=(\mu_1, \mu_2)$ be a partition consisting of two parts and consider an element \[[T]=\begin{bmatrix*}
		1^{(a_1)}   \cdots   n^{(a_n)} \\
		1^{(b_1)}  \cdots  n^{(b_n)}
	\end{bmatrix*} \in \Delta(\mu).\] Then we have the following. \begin{enumerate} \item If $a_1+b_1 > \mu_1$, then $[T]=0$. \item If $a_1+b_1 \le \mu_1$, then \begin{align*}
			[T]=(-1)^{b_1}\sum_{i_2,...,i_n} \tbinom{b_2+i_2}{b_2} \cdots \tbinom{b_n+i_n}{b_n}\begin{bmatrix*}[l]
				1^{(a_1+b_1)}  2^{(a_2-i_2)}  \cdots   n^{(a_n-i_n)} \\
				\noindent 2^{(b_2+i_2)} \cdots n^{(b_n+i_n)}
			\end{bmatrix*},\end{align*} where the sum ranges over all nonnegative integers $i_2, ..., i_n$ such that $i_2+\cdots+i_n=b_1$ and $i_s \le a_s$ for all $s=2,...,n$. \end{enumerate}
\end{lemma}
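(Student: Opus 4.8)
The plan is to prove both statements simultaneously by induction on $b_1$, the multiplicity of the entry $1$ in the second row of $T$. The base case $b_1=0$ is immediate: part (1) is vacuous because $a_1=\mu_1-(a_2+\cdots+a_n)\le\mu_1$, and part (2) reduces to the tautology $[T]=[T]$, the only admissible tuple being $(i_2,\dots,i_n)=(0,\dots,0)$.

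For the inductive step I would use the presentation of $\Delta(\mu)$ from Theorem 3.1. Since $\mu$ has two parts, its only relations come from the summands $M(t)=D(\mu_1+t,\mu_2-t)$ with $1\le t\le\mu_2$, and they assert that $\sum d'_{\mu}(w_{(1)}\otimes w_{(2)}z)=0$ for every $w\in D_{\mu_1+t}V$ and $z\in D_{\mu_2-t}V$, where $\sum w_{(1)}\otimes w_{(2)}$ is the image of $w$ under the $(\mu_1,t)$-component $\Delta\colon D_{\mu_1+t}V\to D_{\mu_1}V\otimes D_t V$ of the comultiplication of $DV$. I would feed into this the data $t=b_1$ (legitimate because $1\le b_1\le\mu_2$ in the inductive step), $w=1^{(a_1+b_1)}2^{(a_2)}\cdots n^{(a_n)}$ and $z=2^{(b_2)}\cdots n^{(b_n)}$. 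Writing $\vec{i}=(i_2,\dots,i_n)$ and $|\vec{i}|=i_2+\cdots+i_n$, expanding the comultiplication of $w$ (all of whose coefficients equal $1$) and carrying out the multiplication by $z$ in $DV$ yields, after the substitution $i_s=a_s-c_s$, the relation
\[
\sum_{\substack{\vec{i}\colon\, 0\le i_s\le a_s\\ |\vec{i}|\le b_1}}\Big(\prod_{s=2}^{n}\binom{b_s+i_s}{b_s}\Big)\begin{bmatrix}1^{(a_1+|\vec{i}|)}2^{(a_2-i_2)}\cdots n^{(a_n-i_n)}\\ 1^{(b_1-|\vec{i}|)}2^{(b_2+i_2)}\cdots n^{(b_n+i_n)}\end{bmatrix}=0
\]
in $\Delta(\mu)$; the summand indexed by $\vec{i}=0$ is exactly $[T]$, so this expresses $[T]$ as minus the sum of the remaining terms.

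A summand with $|\vec{i}|=j$ has precisely $b_1-j$ ones in its second row, so the inductive hypothesis applies to it whenever $0<j<b_1$; for $j=b_1$ the summand already has the straightened shape occurring on the right-hand side of (2), and since its first row has total degree $\mu_1$ and already contains $a_1+b_1$ ones, such terms can occur only when $a_1+b_1\le\mu_1$. If $a_1+b_1>\mu_1$ the $j=b_1$ terms are therefore absent and every surviving term (other than $[T]$) has the same total number $a_1+b_1>\mu_1$ of ones distributed over its two rows, hence vanishes by the inductive form of part (1); thus $[T]=0$, which proves part (1). If instead $a_1+b_1\le\mu_1$, I substitute into each $0<j<b_1$ summand its value supplied by the inductive form of (2), re-index the resulting double sum by writing $\vec{m}=\vec{i}+\vec{k}$ (with $\vec{k}$ the summation variable of the inner straightening, so that $|\vec{m}|=b_1$ and $m_s\le a_s$), and collect the coefficient of each straightened tableau $\begin{bmatrix}1^{(a_1+b_1)}2^{(a_2-m_2)}\cdots n^{(a_n-m_n)}\\ 2^{(b_2+m_2)}\cdots n^{(b_n+m_n)}\end{bmatrix}$.

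I expect the bookkeeping in this last step to be the only real obstacle: one must check that the contributions just produced, together with those of the $j=b_1$ summands of the displayed relation, combine with the correct signs. Using the identity $\binom{b_s+i_s}{b_s}\binom{b_s+m_s}{b_s+i_s}=\binom{b_s+m_s}{b_s}\binom{m_s}{i_s}$, the total coefficient of a fixed straightened tableau factors as $\prod_{s}\binom{b_s+m_s}{b_s}$ times a signed sum of products $\prod_{s}\binom{m_s}{i_s}$ taken over all tuples with $0\le i_s\le m_s$. Since $b_1\ge 1$ forces some $m_s\ge 1$, the vanishing $\sum_{i}(-1)^i\binom{m}{i}=0$ collapses this signed sum, leaving the coefficient equal to $(-1)^{b_1}\prod_{s}\binom{b_s+m_s}{b_s}$, which is exactly the coefficient claimed in (2). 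This closes the induction.
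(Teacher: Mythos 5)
Your proof is correct. The paper does not prove this lemma itself --- it only cites \cite{MS2}, Lemma 4.2 --- but your argument (the single shuffle relation coming from the summand $M(b_1)=D(\mu_1+b_1,\mu_2-b_1)$ of the Akin--Buchsbaum--Weyman presentation, followed by simultaneous induction on $b_1$ and the collapse of the coefficient via $\binom{b_s+i_s}{b_s}\binom{b_s+m_s}{b_s+i_s}=\binom{b_s+m_s}{b_s}\binom{m_s}{i_s}$ and $\sum_{i}(-1)^i\binom{m}{i}=0$) is essentially the standard two-row straightening computation that the cited reference carries out.
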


The point of the above lemma is that in case (2), the coefficients that appear do not depend on $a_1$.

\textit{Proof of Theorem 2.1.}

If $\phi \in \Hom_S(D(\la), \Delta(\mu))$, then by Proposition 3.6 there exist $c_T \in K$ such that  \begin{equation}\phi = \sum_{T \in \st}c_T\phi_T.\end{equation} Define $\phi^+ \in  \Hom_S(D(\la^+), \Delta(\mu^+))$ by \begin{equation}\phi^+ =\sum_{T \in \st}c_T\phi^+_{T^+}.\end{equation} Recall from the last part of Lemma 4.1 that the correspondence $\phi \mapsto \phi^+$ is a bijection. We will show that $\phi$ induces a map $\Delta(\la)\to \Delta(\mu)$ if and only if $\phi^+$ induces a map $\Delta(\la^+) \to \Delta(\mu^+).$ By Remark 3.2, this is equivalent to showing that 
\begin{equation}\phi(x_{i,t})=0 \Leftrightarrow \phi^+(x^+_{i,t})=0\end{equation}
for all $i=1,...,m-1$ and $t=1,...,\la_{i+1}$, where \begin{align*}
	&x_{i,t}=1^{(\la_1)} \otimes \cdots \otimes i^{(\la_i)} \otimes i^{(t)}(i+1)^{(\la_{i+1}-t)} \otimes \cdots \otimes n^{(\la_n)}, 
	\\&x^+_{i,t}=1^{(\la_1+kp^d)} \otimes \cdots \otimes i^{(\la_i)} \otimes i^{(t)}(i+1)^{(\la_{i+1}-t)} \otimes \cdots \otimes n^{(\la_n)}.
\end{align*}

Let $ T \in \st$. Then \[ T=\begin{matrix*}[l]
	1^{(\la_1)}2^{(a_{12})}3^{(a_{13})} \cdots n^{(a_{1n})} \\
	2^{(a_{22})}3^{(a_{23})} \cdots n^{(a_{2n})} \\
	3^{(a_{33})} \cdots n^{(a_{3n})} \\
\cdots\\
n^{(a_{nn})}  \end{matrix*}, \]
where $\sum_{i}a_{ij}=\la_j$ for $j=2,...,n$. 
Hence for the corresponding tableau $ T^+ \in \ST$ we have \[ T^+=\begin{matrix*}[l]
	1^{(\la_1+kp^d)}2^{(a_{12})}3^{(a_{13})} \cdots n^{(a_{1n})} \\
	2^{(a_{22})}3^{(a_{23})} \cdots n^{(a_{2n})} \\
	3^{(a_{33})} \cdots n^{(a_{3n})} \\
	\cdots\\
	n^{(a_{nn})}  \end{matrix*}. \]
\textit{Relations from rows 1,2.}

Consider $x_{1,t}$, where $1 \le t \le \la_2$. From the definition of $\phi_T$ we have  \begin{equation}\phi_T(x_{1,t})=\sum_{j_1+j_2=t}\binom{\la_1+j_1}{j_1}\begin{bmatrix*}[l]
	1^{(\la_1+j_1)}2^{(a_{12}-j_1)}3^{(a_{13})} \cdots n^{(a_{1n})} \\
	1^{(j_2)}2^{(a_{22-j_2})}3^{(a_{23})} \cdots n^{(a_{2n})} \\
	3^{(a_{33})} \cdots n^{(a_{3n})} \\
	\cdots\\
	n^{(a_{nn})}  \end{bmatrix*}.\end{equation} According to Lemma 4.6 (1), we may assume in equation (4.2) that $\la_1+t \le \mu_1$. 

Likewise we have \[\phi_{T^+}(x_{1,t}^+)=\sum_{j_1+j_2=t}\binom{\la_1+j_1+kp^d}{j_1}\begin{bmatrix*}[l]
	1^{(\la_1+j_1+kp^d)}2^{(a_{12}-j_1)}3^{(a_{13})} \cdots n^{(a_{1n})} \\
	1^{(j_2)}2^{(a_{22-j_2})}3^{(a_{23})} \cdots n^{(a_{2n})} \\
	3^{(a_{33})} \cdots n^{(a_{3n})} \\
	\cdots\\
	n^{(a_{nn})} \end{bmatrix*},\] where again by Lemma 4.6 (1) we assume that $\la_1+t \le \mu_1$. Applying Lemma 3.7 we obtain  \begin{equation}\phi_{T^+}(x_{i,t}^+)=
\sum_{j_1+j_2=t}\binom{\la_1+j_1}{j_1}\begin{bmatrix*}[l]
	1^{(\la_1+j_1+kp^d)}2^{(a_{12}-j_1)}3^{(a_{13})} \cdots n^{(a_{1n})} \\
	1^{(j_2)}2^{(a_{22-j_2})}3^{(a_{23})} \cdots n^{(a_{2n})} \\
	3^{(a_{33})} \cdots n^{(a_{3n})} \\
	\cdots\\
n^{(a_{nn})}
 \end{bmatrix*}.\end{equation} Now we apply Lemma 4.6 (2) to each term in the right hand side of (4.5) and (4.6). Substituting in (4.2) and (4.3) we obtain  

\begin{align*}
	&\phi(x_{1,t})=\sum_{T \in \st}a_T[T],\\&\phi^+(x^+_{1,t})=\sum_{T \in \st}a_T[T^+],
\end{align*}
for some $a_T \in K$. By Remark 4.3 (1), we may apply Corollary 4.5 to conclude that equivalence (4.4) holds (for $i=1$).\\
\textit{Relations from rows i,i+1, where i$>$1.}

This is similar to the previous case, but simpler. Consider $x_{i,t}$, where $i>1$ and $1 \le t \le \la_2$. Then \begin{align}\nonumber\phi_T(x_{i,t})&=\\ \sum_{j_1+\dots j_{i+1}=t}\binom{a_{1i}+j_1}{j_1}\cdots\binom{a_{ii}+j_i}{j_i}&\begin{bmatrix*}[l]
		1^{(\la_1)}\cdots i^{(a_{1i}+j_1)}(i+1)^{(a_{1i+1}-j_1)} \cdots n^{(a_{1n})} \\
		2^{(a_{22})}\cdots i^{(a_{1i}+j_2)}(i+1)^{(a_{2i+1}-j_2)} \cdots n^{(a_{2n})}  \\
			3^{(a_{33})}\cdots i^{(a_{3i}+j_3)}(i+1)^{(a_{3i+1}-j_3)} \cdots n^{(a_{3n})}\\
		\cdots\\
		n^{(a_{nn})}  \end{bmatrix*}\end{align}
and likewise, \begin{align}\nonumber\phi_{T^{+}}(x_{i,t}^+)&=\\ \sum_{j_1+\dots j_{i+1}=t}\binom{a_{1i}+j_1}{j_1}\cdots\binom{a_{ii}+j_i}{j_i}&\begin{bmatrix*}[l]
		1^{(\la_1+kp^d)}\cdots i^{(a_{1i}+j_1)}(i+1)^{(a_{1i+1}-j_1)} \cdots n^{(a_{1n})} \\
		2^{(a_{22})}\cdots i^{(a_{1i}+j_2)}(i+1)^{(a_{2i+1}-j_2)} \cdots n^{(a_{2n})}  \\
		3^{(a_{33})}\cdots i^{(a_{3i}+j_3)}(i+1)^{(a_{3i+1}-j_3)} \cdots n^{(a_{3n})}\\
		\cdots\\
		n^{(a_{nn})}  \end{bmatrix*}.\end{align}
As before, substituting (4.7), (4.8) we have \begin{align*}
	&\phi(x_{i,t})=\sum_{T \in \st}b_T[T],\\&\phi^+(x^+_{i,t})=\sum_{T \in \st}b_T[T^+],
\end{align*}
for some $b_T \in K$. By Remark 4.3 (1) and Corollary \ref{cor} we conclude that equivalence (4.4) holds (for $i>1$).

Having shown (4.4) for all $i,t$, we obtain linear maps \begin{align*}
	&\Psi^+:\Hom_S(\Delta(\la), \Delta(\mu)) \to \Hom_{S'}(\Delta(\la^+), \Delta(\mu^+)),\\&
	\Psi^-:\Hom_{S'}(\Delta(\la^+), \Delta(\mu^+)) \to \Hom_{S}(\Delta(\la), \Delta(\mu))
\end{align*}
defined as follows. If $\gamma \in \Hom_S(\Delta(\la), \Delta(\mu)) $ is induced by $\sum_{T \in \st}c_T\phi_T$, then $\Psi^+(\gamma)$ is the map induced by $\sum_{T \in \st}c_T\phi^+_{T^+}$, and likewise, if $\delta \in \Hom_{S'}(\Delta(\la^+), \Delta(\mu^+)) $ is induced by $\sum_{T \in \st}c_{T}\phi^{+}_{T^+}$, then $\Psi^-(\delta)$ is the map induced by $\sum_{T \in \st}c_T\phi_T$.

From Lemma 4.1 it follows that the two compositions $\Psi^+ \circ \Psi^-$ and  $\Psi^- \circ \Psi^+$ are the corresponding identity maps and hence $\Psi^+$ is an isomorphism.


\begin{thebibliography}{99}


\bibitem[1]{AB}Akin K. and Buchsbaum D., Characteristic-free representation theory of the general linear group II: Homological considerations, Adv. in Math. 72 (1988), 172-210.
\bibitem[2]{ABW}Akin K., Buchsbaum D. and Weyman J., Schur functors and Schur complexes, Adv. in Math. 44 (1982), 207-278.

\bibitem[3]{CL}Carter, R.W., Lusztig, G. On the modular representations of the general linear and symmetric groups. Math Z. 136 (1974), 193-242. 


\bibitem[4]{DF}Dodge, C., Fayers, M., Some new decomposable Specht modules, J. Algebra 357 (2012), 235-262. 
\bibitem[5]{F}Fayers M., http://www.maths.qmul.ac.uk/~mf/homcalculator.html
\bibitem[6]{FL}Fayers M. and Lyle S., Row and column removal theorems for homomorphisms between Specht modules, J. Pure Appl. Algebra 185 (2003),147-164.

\bibitem[7]{Gr}Green, J. A., Polynomial Representations of $GL_n$, 2nd edition, LNM 830, Springer, 2007.
\bibitem[8]{He}Hemmer, D. J., 'Frobenius twists' in the representation theory of the symmetric group, Proceedings os Symposia in Pure Mathematics 86 (2012), 187-200.
\bibitem[9]{Hen}Henke, A. Schur subalgebras and an application to the symmetric group, J. Algebra 233 (2000), 342-362.

\bibitem[10]{HK}Henke, A. and Koenig S., Relating polynomial GL(n)-representations in different degrees, J. reine angew. Math. 551 (2002), 219-235.
\bibitem[11]{Jam}James G. D., The Representation Theory of the Symmetric Groups, LNM 682, Springer, 1978.


\bibitem[12]{Ku}Kulkarni U., On the Ext groups between Weyl modules for $GL_n$, J. Algebra 304 (2006), 510-542.




\bibitem[13]{MS1}Maliakas M., Stergiopoulou D.-D., On homomorphisms involving a hook Weyl module, J. Algebra
585 (2021), 1-24.
\bibitem[14]{MS2}Maliakas M., Stergiopoulou D.-D., On homomorphisms into Weyl modules corresponding to partitions with two parts, arXiv:2107.11842


\end{thebibliography}
\end{document}